\theoremstyle{definition}
\newtheorem{Def}{Definition}
\newtheorem{Ex}{Example}
\newtheorem{Rem}{Remark}
\newtheorem{Conj}{Conjecture}
\newtheorem{Cor}{Corollary}
\newtheorem{Prop}{Proposition}
\newtheorem{Thm}{Theorem}
\newtheorem{Fact}{Fact}
\begin{document}
\title[obtaining special generic maps into the $3$-dimensional space]{A new explicit way of obtaining special generic maps into the $3$-dimensional Euclidean space}

\author{Naoki Kitazawa}
\keywords{Singularities of differentiable maps; generic maps. Differential topology.}
\subjclass[2010]{Primary~57R45. Secondary~57N15.}
\address{19-9-606 Takayama, Tsuzuki-ku, Yokohama, Kanagawa 224-0065, JAPAN}
\email{naokikitazawa.formath@gmail.com}
\maketitle
\begin{abstract}
 A {\it special generic} map is a smooth map regarded as a natural generalization of Morse functions with just $2$ singular points on homotopy spheres.
Canonical projections of unit spheres are simplest examples of such maps and manifolds admitting special generic maps into the plane are completely determined by
 Saeki in 1993 and ones admitting such maps into general Euclidean spaces are determined under appropriate conditions.
 
Moreover, if the
 difference of dimensions of source and target manifolds are not so large, then the diffeomorphism types
 of source manifolds are often limited; for example, homotopy spheres except standard spheres do not admit special generic 
maps into Euclidean spaces whose dimensions are not so low. As another example, $4$-dimensional simply connected
 manifolds admitting special generic maps are only manifolds represented as the connected sum of the total
 spaces of $2$-dimensional sphere bundles over the $2$-dimensional sphere (or the $4$-dimensional
 standard sphere) and there are many manifolds homeomorphic and not diffeomorphic to
 these manifolds. These explicit facts make special generic maps attractive objects
 in the theory of Morse functions and higher dimensional
 versions and application to algebraic and differentiable topology of manifolds, which is an important study in both singuarity theory of maps and algebraic
 and differential topology of manifolds.  

In this paper, we demonstrate a way of construction of special generic maps into the $3$-dimensional Euclidean space.
For this, first we prepare maps onto $2$-dimensional polyhedra regarded as simplicial maps
 naturally called {\it pseudo quotient maps}, which are generalizations of the quotient maps to the spaces of all the
 connected components of inverse images, so-called {\it Reeb spaces} of original smooth maps, being fundamental and 
important tools in the studies. More precisely, we prepare specific pseudo quotient maps, locally construct maps onto $3$-dimensional manifolds, glue them and realize the
 map as a special generic map into the $3$-dimensional space. Most of the
 procedure for the construction is based on technique the author previously noticed and used in simpler situations.

The success of the construction explicitly shows that a class of maps which seems to cover a larger class of source manifolds may not be not so large and that
 the diffeomorphism types of source manifolds may be restricted as strongly as in the case of special generic maps. We
 also explain differential topological facts and problems related to this.

\end{abstract}

\section{Introduction.}
\label{sec:1}
\subsection{Backgrounds and fundamental tools.}
Morse functions and higher dimensional
 versions and application to algebraic and differentiable topology of manifolds is an important study in both singularity theory of maps and algebraic
 and differential topology of manifolds.  

A {\it fold map} is a smooth map such that each singular point is $p$ is
    of the form $(x_1,\cdots,x_m) \mapsto (x_1,\cdots,x_{n-1},\sum_{k=n}^{m-i(p)}{x_k}^2-\sum_{k=m-i(p)+1}^{m}{x_k}^2)$ for some
     integers $m,n,i(p)$. $i(p)$ is taken as a non-negative integer not larger than $\frac{m-n+1}{2}$ uniquely and we call $i(p)$ the {\it index} of $p$. The set of all the singular points of an index is a smooth submanifold of dimension $n-1$.
Morse functions are regarded as fold maps.
A fold map is {\it stable} or the $C^{\infty}$ equivalence classes of the maps are invariant under slight perturbations in the $C^{\infty}$ Whitney topology, if and only if
 the restriction to the set of all the
 singular points (of a fixed index), which are codimension $1$ smooth immersions, are transversal. Note that for example, stable Morse functions or Morse functions such that
 at distinct singular points, the values are distinct, exist densely. For Morse functions, fold maps and stable maps etc., see \cite{golubitskyguillemin} for example. For algebraic and
 differential topological properties of fold maps, see \cite{saeki} for example. 

{\it Special generic maps} are fold maps indices of whose singular points are $0$. They are regarded as simplest generalizations of Morse functions with just $2$ singular points on homotopy spheres. 
Canonical projections of unit spheres are simplest examples of such maps and manifolds admitting special generic maps into the plane are completely determined by
 Saeki in 1993 \cite{saeki2}, ones admitting such maps into general Euclidean spaces are determined under appropriate conditions and more precise facts have been shown. For example, if the difference of dimensions of source and target manifolds are not so large, then the diffeomorphism types
 of source manifolds are often limited; homotopy spheres except standard spheres do not admit special generic 
maps into Euclidean spaces whose dimensions are not so low. As another example, $4$-dimensional simply connected
 and closed manifolds admitting special generic maps into ${\mathbb{R}}^3$ are only manifolds represented as the connected sum of the total
 spaces of smooth $S^2$-bundles over the $2$-dimensional sphere (or the $4$-dimensional
 standard sphere) and there are many manifolds homeomorphic and not diffeomorphic to
 these manifolds; in \cite{saekisakuma} and \cite{saekisakuma2}, there are several $4$-dimensional closed manifolds admitting fold maps into ${\mathbb{R}}^3$ and
 admitting no special generic maps into ${\mathbb{R}}^3$ homeomorphic to ones admitting special generic maps into ${\mathbb{R}}^3$. 

These facts make special generic maps attractive objects from the viewpoint of differential topology of smooth manifolds. 

\subsection{Contents of this paper.}
 We define a {\it normal spherical} fold map. 
\begin{Def}
\label{def::1}
A stable fold map from a closed manifold of dimension $m$ into ${\mathbb{R}}^n$ satisfying $m \geq n$ is said to be {\it normal spherical} ({\it standard-spherical}) if the inverse image
 of each regular value is a disjoint union of (resp. standard) spheres (or points) and 
the connected component containing a singular point of the inverse image of a
 small interval intersecting with the singular value set at once in its interior is either of the following.
\begin{enumerate}
\item The ($m-n+1$)-dimensional standard closed disc.
\item A manifold PL homeomorphic to an ($m-n+1$)-dimensional compact manifold obtained by removing the interior of three disjoint ($m-n+1$)-dimensional smoothly embedded closed discs from the ($m-n+1$)-dimensional standard sphere.
\end{enumerate}  
\end{Def}
As a fundamental fact, this class includes special generic maps. 
Such fold maps are studied by Saeki and Suzuoka for example and for example, they can be obtained by projections of special generic maps by fundamental discussions
 of Saeki and Suzuoka \cite{saekisuzuoka}. Conversely, whether manifolds admitting
 such maps admit special generic maps into one dimensional higher Euclidean spaces is a natural and interesting problem, explicitly stated in the paper. Several answers
 have been given in \cite{kitazawa4} and \cite{kitazawa5} in the case of Morse functions.
In this paper, we consider such maps and extended ones and by applying technique based on ideas useful for obtaining the shown theorems for several
 cases of Morse functions, we obtain special generic maps. As additional remarks, we
 explain about diffeomorphsim types of source manifolds admitting such maps: for example, we will present the discovered fact that classes of source manifolds are not so large as conjectured and related topics on differential topology of manifolds. 

This paper is organized as follows. In the next section, we review Reeb spaces and we define {\it pseudo special generic maps} and as general
 PL maps, {\it pseudo quotient maps}, which were first introduced in \cite{kobayashisaeki}. Pseudo special generic maps have been introduced also in \cite{kitazawa3} and \cite{kitazawa5}. They are fundamental objects in the paper. 
After introducing these maps, we perform
 construction of special generic maps into ${\mathbb{R}}^3$; this is a main result. Last, we explain remarks on differential topological meanings of
 the obtained result and related facts and problems. 

Throughout the paper, all the manifolds and maps between them  and bundles over manifolds with fibers being manifolds etc. are smooth and of class $C^{\infty}$ unless otherwise stated. We also
 note that maps between polyhedra are PL unless otherwise stated in the paper.

Last, we call the set of all singular points of a smooth map the {\it singular set} of the map and we call
 the image of the singular set the {\it singular value set}. We call the set of all regular values the {\it regular value set}.

\section{Reeb spaces, triangulable maps and two classes of maps on manifolds.}
\subsection{Reeb spaces and triangulable maps.}
We review Reeb spaces. For precise facts, see also \cite{reeb} and introduced papers. 
\begin{Def}
\label{def:2}
Let $X$ and $Y$ be topological spaces. For points $p_1, p_2 \in X$ and for a continuous map $c:X \rightarrow Y$, we define as $p_1 {\sim}_c p_2$ if and only if $p_1$ and $p_2$ are in
 the same connected component of $c^{-1}(p)$ for some $p \in Y$. The relation is an equivalence relation and we call the quotient space $W_c:=X/{\sim}_c$ the {\it Reeb space} of $c$.
\end{Def}
\begin{Ex}
\label{ex:2}
\begin{enumerate}
\item
\label{ex:2.1}
 For a Morse function, the Reeb space is a graph.
\item
\label{ex:2.2} 
 For a special generic map, the Reeb space is regarded as an immersed compact manifold with a non-empty boundary whose dimension is same as that of the target
 manifold. The Reeb space of a special generic map is known to be contractible
 if and only if the source manifold is a homotopy sphere. 

It is also a fundamental and important property that the quotient map from the source manifold onto the
 Reeb space induces isomorphisms on homology groups whose degrees are not larger than the difference of the dimension of the source manifold and that of the target one. See \cite{saeki2} for example. Moreover, we mention remarks on special generic maps in some parts of the present paper. 
\item
\label{ex:2.3}
 The Reeb spaces of stable (fold) maps are polyhedra whose dimensions and those of the target manifolds coincide. For
 a normal spherical Morse function, it is also a fundamental and important property that the quotient map from the source manifold onto the
 Reeb space induces isomorphisms on homology groups whose degrees are smaller than the difference of the dimension of the source manifold and that of
 the target one. See \cite{saekisuzuoka} and see also \cite{kitazawa} and \cite{kitazawa2} for example.
\item
\label{ex:2.4}
 For a (proper) stable map, the Reeb space is a polyhedron (\cite{shiota}).
\end{enumerate}
\end{Ex}

We mention {\it triangulable} maps.

\begin{Def}
\label{def:2}
Let $X$ and $Y$ be polyhedra. A continuous map $c:X \rightarrow Y$ is said to
 be {\it triangulable} if there exists a pair of triangulations of $X$ and $Y$ and homeomorphisms $({\phi}_X,{\phi}_Y)$ onto $X$ and $Y$ respectively such that the composition ${{\phi}_Y}^{-1} \circ c \circ {{\phi}_X}$ is a simplicial map with respect to the given triangulations. We also say that $c$ is triangulable with respect to $({\phi}_X,{\phi}_Y)$
\end{Def}

\begin{Fact}[\cite{shiota}]
\label{fact:1}
{\rm (}Proper{\rm )} stable maps are always triangulable {\rm (}with respect to pairs of homeomorphisms giving the canonical triangulations of the smooth manifolds{\rm )}.
\end{Fact}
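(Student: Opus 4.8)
The plan is to derive Fact~\ref{fact:1} from the general triangulation theory for subanalytic (more precisely, Thom) maps, after first replacing the given stable map by a real-analytic model. First I would recall two classical facts: every $C^{\infty}$ manifold carries a compatible real-analytic structure, unique up to real-analytic diffeomorphism (Whitney), and every such structure determines a smooth (Whitehead) triangulation, unique up to PL homeomorphism --- this is the ``canonical triangulation'' referred to in the statement. I would then use that the set of proper stable maps is open in $C^{\infty}(M,N)$ in the Whitney topology (Mather), together with the density of real-analytic maps (via Whitney approximation and an analytic tubular-neighbourhood retraction onto $N$), to choose a proper real-analytic map $g\colon M\to N$ arbitrarily close to the given stable $f$; then $g$ is again stable and is joined to $f$ by a short path lying in the open set of proper stable maps. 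Since the $\mathcal{A}$-orbit of a proper stable map is open, that path stays in a single orbit, so $f$ and $g$ are $C^{\infty}$ equivalent, i.e. there are diffeomorphisms $({\Phi},{\Psi})$ with ${\Psi}\circ f=g\circ{\Phi}$. As triangulability is preserved under pre- and post-composition with diffeomorphisms --- a smooth triangulation composed with a diffeomorphism of the target is again a smooth, hence canonical, triangulation --- it then suffices to triangulate $g$.

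Second, I would equip the analytic model $g$ with the stratifications that make it a proper Thom map. The Thom--Boardman loci ${\Sigma}^{I}(g)\subset M$ are subanalytic, and stability of $g$ is exactly multi-transversality of the jet extensions of $g$ to these loci; from this one produces, by the standard successive-refinement argument and the existence of Whitney stratifications of subanalytic sets, a subanalytic Whitney stratification $\mathcal{S}_M$ of $M$ and one $\mathcal{S}_N$ of $N$ such that $g$ carries each stratum of $\mathcal{S}_M$ submersively onto a stratum of $\mathcal{S}_N$ and Thom's regularity condition $a_g$ holds along each incident pair of strata of $\mathcal{S}_M$. A proper map equipped with such a pair of stratifications is, by definition, a proper Thom map; this step is essentially bookkeeping once the Thom--Mather normal forms at stable singularities are in hand.

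Third, I would invoke the triangulation theorem for proper subanalytic Thom maps, which is the content of~\cite{shiota}: such a map admits a triangulation of its source and a triangulation of its target, each compatible with the chosen stratification, with respect to which the map becomes simplicial. Applied to $(g,\mathcal{S}_M,\mathcal{S}_N)$ this yields homeomorphisms $({\phi}_M,{\phi}_N)$ onto $M$ and $N$ for which ${{\phi}_N}^{-1}\circ g\circ{\phi}_M$ is simplicial; since the two triangulations refine the analytic Whitehead triangulations of $M$ and $N$, the pair $({\phi}_M,{\phi}_N)$ gives the canonical triangulations. Transporting this through the equivalence of the first step --- replacing $({\phi}_M,{\phi}_N)$ by $({\Phi}^{-1}\circ{\phi}_M,{\Psi}^{-1}\circ{\phi}_N)$ --- produces the same conclusion for $f$, which is exactly what Fact~\ref{fact:1} asserts.

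The hard part is entirely the third step; the first two are reductions, and even Whitney stratifiability of stable maps is by now standard. One cannot triangulate $M$ and $N$ independently and hope the map becomes simplicial: the two triangulations must be built simultaneously and compatibly. This forces an inductive construction over $\mathcal{S}_N$ ordered by the frontier relation (equivalently, over the skeleta of the target triangulation), at each stage extending the partial triangulation by using the Thom--Mather first isotopy lemma --- which gives local topological triviality of $g$ over each target stratum --- and controlling the gluing across frontiers by means of the $a_g$ condition together with the subanalytic toolkit ({\L}ojasiewicz inequalities, curve selection, Hironaka's triangulation of subanalytic sets, Hardt's local triviality). Making all these local choices cohere into genuine simplicial complexes on $M$ and $N$ at once is the technical heart of Shiota's work, and is what we quote here rather than reprove.
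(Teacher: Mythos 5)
Your proposal is correct in outline and follows essentially the same route as the paper, which states this as an imported Fact and simply cites Shiota's triangulation theorem for proper (subanalytic) Thom maps; your reductions (replacing the stable map by an analytic representative via openness of the $\mathcal{A}$-orbit, stratifying it as a Thom map, then quoting Shiota for the simultaneous triangulation of source and target) are the standard ones, and are unproblematic in the present setting where the source manifolds are compact. No further comparison is needed, since the technical core is in both cases delegated to \cite{shiota}.
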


\begin{Fact}[\cite{hiratukasaeki} and \cite{hiratukasaeki2}]
\label{fact:2}
For a triangulable map $c:X \rightarrow Y$ with respect to $(\phi_X,\phi_Y)$, the Reeb space $W_c$ is a polyhedron given by a homeomorphism ${\phi}_c$ from a polyhedron  and two maps $q_c:X \rightarrow W_c$
   and $\bar{c}:W_c \rightarrow Y$ are triangulable maps with respect to the corresponding pairs of the homeomorphisms.
\end{Fact}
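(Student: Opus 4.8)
The plan is to reduce the statement to the case of a simplicial map and then to build a triangulation of the Reeb space by hand, one simplex of the target at a time, exploiting that simplicial maps are trivial bundles over open simplices.

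First I would dispose of the homeomorphisms $\phi_X,\phi_Y$: a homeomorphism of the source composed with one of the target induces a homeomorphism of Reeb spaces that intertwines the quotient maps and the induced maps to the target, so it suffices to treat $c\colon |K|\to|L|$ simplicial for simplicial complexes $K,L$. I would also allow free passage to iterated barycentric subdivisions of $K$ and $L$, since barycentric subdivision is functorial on simplicial maps and changes neither $|K|,|L|$ nor $c$; likewise I reserve the right to subdivide the triangulation of $W_c$ that is going to be constructed, and $K$ along with it, since Fact~\ref{fact:2} only asks for triangulability with respect to \emph{some} compatible triangulations.

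Second comes the local model. For a simplicial map, the restriction of $c$ over an open simplex $\mathrm{int}\,\tau$ of $L$ is a trivial bundle whose fibre is the subpolyhedron $F_\tau:=c^{-1}(b_\tau)$ over the barycenter $b_\tau$: over $\mathrm{int}\,\tau$ the preimage is the disjoint union of the relative interiors of those simplices of $K$ mapped \emph{onto} $\tau$, and on each such simplex the affine surjection to $\tau$ is, in suitable coordinates, a product projection. Hence over $\mathrm{int}\,\tau$ the set of connected components of the fibres is locally, and therefore (as $\mathrm{int}\,\tau$ is connected) globally, identified with $\pi_0(F_\tau)$, and the part of $W_c$ over $\mathrm{int}\,\tau$ is a disjoint union of copies of $\mathrm{int}\,\tau$ indexed by $\pi_0(F_\tau)$, each carried affinely isomorphically onto $\mathrm{int}\,\tau$ by $\bar c$. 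This produces a decomposition of $W_c$ into ``cells'' $\mathrm{int}\,\tau\times\{C\}$, $C\in\pi_0(F_\tau)$, on each of which $\bar c$ is an affine isomorphism, and such that $q_c$ sends the relative interior of each simplex $\sigma$ of $K$ affinely into the cell indexed by the component of $c^{-1}(c(\sigma))$ containing it.

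Third, and this is where the real work lies, I would upgrade this cell decomposition to an honest triangulation for which $q_c$ and $\bar c$ are simplicial. The obstruction is that the closure in $W_c$ of a cell $\mathrm{int}\,\tau\times\{C\}$ need not be an embedded simplex — several limiting directions toward a face $\tau'<\tau$ can land in a single component of $F_{\tau'}$, so the closure need not even be a continuous image of $\tau$. I would resolve this by induction on the skeleta of $L$, using the cone structure of a simplicial map over the open star of a vertex: for a vertex $v$ of (a subdivision of) $L$, the set $c^{-1}(\overline{\mathrm{St}}(v))$ is a mapping-cylinder-type neighbourhood of $c^{-1}(v)$ fibred compatibly over $\overline{\mathrm{St}}(v)=v*\mathrm{Lk}(v)$, so the part of $W_c$ over $\overline{\mathrm{St}}(v)$ is obtained from the already-triangulated part over $\mathrm{Lk}(v)$ by coning along the finite ``adjacency'' maps $\pi_0(c^{-1}(\delta))\to\pi_0(c^{-1}(v))$, $\delta\subseteq\mathrm{Lk}(v)$. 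Gluing these local cones over all vertices, after enough barycentric subdivisions to make the adjacency data coherent, yields a simplicial complex with a homeomorphism $\phi_c$ onto $W_c$ such that $\bar c$ is simplicial and restricts to an isomorphism on each simplex; subdividing $K$ so that $q_c$ becomes affine on each simplex then makes $q_c$ simplicial as well, since $q_c=(\bar c|_{\text{target simplex}})^{-1}\circ(c|_\sigma)$ is a composite of affine maps. The genuinely hard points — that the quotient topology on $W_c$ agrees with the topology of the polyhedron just assembled (i.e.\ that $q_c$ is a quotient map onto it), and that the local cones glue without creating non-simplicial identifications, which is exactly why iterated subdivision is forced on us rather than working with the naive cells — are the technical core of \cite{hiratukasaeki} and \cite{hiratukasaeki2}.
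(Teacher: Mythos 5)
The paper offers no proof of Fact~\ref{fact:2} at all: it is imported as a black box from \cite{hiratukasaeki} and \cite{hiratukasaeki2}, so there is no internal argument to compare yours against. Judged on its own terms, your outline is a faithful sketch of how the triangulation of the Stein factorization is actually obtained in those references: reduce to a simplicial map, use that a simplicial map is a trivial bundle with fibre $F_\tau=c^{-1}(b_\tau)$ over each open simplex $\mathrm{int}\,\tau$ of the target, so that the part of $W_c$ over $\mathrm{int}\,\tau$ is $\mathrm{int}\,\tau\times\pi_0(F_\tau)$, and then assemble these cells into a genuine complex by an induction over skeleta with enough subdivision. You also correctly identify where the real difficulty sits (closures of cells need not be simplices; the quotient topology must be shown to agree with the polyhedral one), which is exactly the content of the cited papers.

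Two caveats. First, your claim that ``barycentric subdivision is functorial on simplicial maps and changes neither $|K|$, $|L|$ nor $c$'' is not correct as stated: if $c$ collapses a simplex $\sigma$ onto a lower-dimensional $\tau$, the barycentre of $\sigma$ need not map to the barycentre of $\tau$, so $|c|$ is generally \emph{not} simplicial with respect to the barycentric subdivisions of $K$ and $L$. What you actually need is the standard (and true) statement that for any subdivision $L'$ of $L$ there is a subdivision $K'$ of $K$ making $c\colon K'\to L'$ simplicial; that is the tool that licenses your repeated subdivisions. Second, since you explicitly defer the coherent-gluing and quotient-topology arguments to \cite{hiratukasaeki} and \cite{hiratukasaeki2}, your text is an outline rather than a self-contained proof --- which is no worse than what the paper itself does, but should be stated as such.
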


\subsection{Two classes of PL or smooth maps.}
The following is introduced by the author \cite{kitazawa5}. 
\begin{Def}
\label{def:3}
Let $m>n$ be positive integers.
A smooth map $f_p$ from a closed manifold $M$ of dimension $m$ onto a compact manifold $W_p$ of dimension $n$ satisfying $\partial W_p \neq \emptyset$ is
 said to be {\it pseudo special generic} if the following hold.
\begin{enumerate}
\item $f_p {\mid}_{{f_p}^{-1}(W_p-\partial W_p)}:{f_p}^{-1}(W_p-\partial W_p) \rightarrow W_p-\partial W_p$ gives a smooth $S^{m-n}$-bundle.
\item For a small collar neighborhood $N(\partial W_p)$ of $\partial W_p$ in $W_p$, regarded as a trivial bundle $\partial W_p \times [0,1]$ where $\partial W_p \times \{0\}$
 corresponds to the boundary $\partial W_p$, for each point $(p,0) \in \partial W_p \times \{0\}$ and a small open neighborhood $U_p$,
 $f {\mid}_{{f_p}^{-1}(U_p \times [0,1])}:f^{-1}(U_p \times [0,1]) \rightarrow U_p \times [-1,1] $ has the same local form as a singular point of index $0$ of a fold map
 from an $m$-dimensional manifold into ${\mathbb{R}}^n$.
 From fundamental discussion of \cite{saeki} for example, ${f_p}^{-1}(N(\partial W_p))$ is a linear $D^{m-n+1}$-bundle over $\partial W_p$ given by the composition $f_p$ and the
 canonical projection and the bundle is seen as a normal bundle of the submanifold $f^{-1}(\partial W_p) \subset M$.
\end{enumerate}
\end{Def}
All the quotient maps onto the Reeb spaces defined from special generic maps into Euclidean spaces are regarded as pseudo special generic.
We also note that if the manifold $W_p$ can be immersed into the Eulidean space whose dimension is same as that of $W_p$, then by composing the immersion to the map, we
 have a special generic map. Special generic maps into Euclidean spaces are obtained like this, which is based on a fundamental discussion of \cite{saeki2}. 

We introduce {\it pseudo spherical fold maps} based on {\it pseudo quotient maps}. Pseudo quotient maps were first introduced in \cite{kobayashisaeki} and later defined again by \cite{kitazawa3}. 
\begin{Def}
\label{def:4}
Let $m>n$ be positive integers.
Let $f_q$ be a triangulable map from a closed manifold $M$ of dimension $m$ onto a polyhedron $W_q$ of dimension $n$ with respect
 to the corresponding canonical homeomorphisms. If for each point $p \in W_q$ and the interior of a small
 connected and closed neighborhood $N_p$ being $n$-dimensional and satisfying $p \in {\rm Int} N_p$, 
there exist a spherical fold map $f$, a point ${p}^{\prime} \in W_f$, a small connected and closed neighborhood $N_{{p}^{\prime}}$ with its interior containing the point, and
 a diffeomorphism $\Phi$ and a PL homeomorphism $\phi$ such that 
for the maps $f_{q,N_p}=f_q {\mid} {f_q}^{-1}(N_p):{f_q}^{-1}(N_p) \rightarrow N_p$ and $q_{f,N_{{p}^{\prime}}}:{q_f}^{-1}(N_{{p}^{\prime}}) \rightarrow N_{{p}^{\prime}}$, 
 the relation $q_{f,N_{{p}^{\prime}}} \circ \Phi =\phi \circ f_{q.N_p}$ holds, then $W_q$ is said to be a {\it pseudo quotient space} and $f_q$ is said to be a {\it pseudo normal spherical fold map}.
\end{Def}
We can naturally define a {\it singular} point, a {\it singular value}, a {\it regular value},  the {\it singular set}, the {\it singular value set}, the {\it regular value set} etc. of a pseudo spherical fold map.
There are some types of singular values of pseudo spherical fold maps and we introduce them in the following fundamental proposition. We can see the statements
 from fundamental explanations of articles on {\it Turaev`s shadow} such as \cite{martelli} and \cite{turaev} and ones on algebraic and differential topological properties of stable fold maps
 such as \cite{kobayashisaeki} \cite{saeki}, \cite{saeki2} and \cite{saekisuzuoka} for example: they are referenced later as important articles for fundamental and important facts
 related to such topics. 

\begin{Prop}
Let $f_p$ be a pseudo normal spherical fold map from a closed manifold $M$ of dimension $m>2$ onto a polyhedron $W_p$ of dimension $2$.
\begin{enumerate}
\item For a singular value $p \in W_p$, its inverse image has one or two singular points. In the latter case, we call
 the singular value a {\rm double point}. In the former case, we call the value a {\rm single point}.
\item For a single point $p \in W_p$, for a small regular neighborhood of $p$,
 the intersection of  the regular neighborhood and the regular value set consists of one or three connected components PL homeomorphic to the $2$-dimensional closed disc. 
 For the former case, we call $p$ a {\rm definite single point}.
\end{enumerate}
\end{Prop}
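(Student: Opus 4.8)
The plan is to prove both assertions by transporting them to the local normal form supplied by Definition~\ref{def:4}. Fix a point $p \in W_p$ in the singular value set of $f_p$. By the definition of a pseudo normal spherical fold map there are a small closed $2$-dimensional neighbourhood $N_p$ of $p$, a normal spherical fold map $f : N \to \mathbb{R}^2$ on a closed $m$-manifold $N$, a point $p^{\prime} \in W_f$ with $\phi(p) = p^{\prime}$, a diffeomorphism $\Phi$ and a PL homeomorphism $\phi$ satisfying $q_{f, N_{p^{\prime}}} \circ \Phi = \phi \circ f_p|_{{f_p}^{-1}(N_p)}$. Because $W_f$ is the Reeb space of $f$, the set $q_f^{-1}(p^{\prime})$ is a single connected component $C$ of the fibre $f^{-1}(\bar{f}(p^{\prime}))$; moreover $\Phi$ carries ${f_p}^{-1}(p)$ onto $C$ and the singular set of $f_p$ onto the singular set $S(f)$ over the relevant neighbourhoods. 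So it is enough to describe $W_f$ and $q_f$ near $p^{\prime}$ for a normal spherical fold map $f$, and I would organise this according to how $C$ meets $S(f)$.

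For part (1): a normal spherical fold map is by definition a stable fold map, so, as recalled in the introduction, $S(f) \subset N$ is a closed $1$-manifold whose index strata are immersed into $\mathbb{R}^2$ mutually transversally; hence the singular value set is an immersed curve with only transverse double points, and at most two singular points of $f$ lie over any value. Therefore $C$ contains exactly one or two singular points of $f$, and pulling this back through $\Phi$ gives the dichotomy of part (1), the two-point case being named a double point and the one-point case a single point.

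For part (2): assume $p$ is a single point, so $C$ contains a unique singular point $x$, at which $f$ has a single fold germ. Pick a short arc $J$ in the target through $\bar{f}(x)$, transverse to the singular value set and meeting it only there, and let $P$ be the component of $f^{-1}(J)$ containing $x$. By the definition of a normal spherical fold map, $P$ is either the standard $(m-1)$-disc or $S^{m-1}$ with the interiors of three disjoint smoothly embedded $(m-1)$-discs removed, so $\partial P$ is one or three copies of $S^{m-2}$. I would then run elementary Morse theory on the proper function $f|_P : P \to J$, whose only critical point is $x$ and is non-degenerate since $f$ is a fold: when $\partial P$ is a single sphere, $x$ cannot be a saddle (a saddle forces $f|_P$ onto $J$ with $\partial P$ distributed between the two ends, impossible for a single sphere), so $x$ is an extremum and $P$ caps off one $S^{m-2}$-fibre component on just one side of $J$; when $\partial P$ consists of three spheres, $x$ cannot be an extremum (else $P$ would be a disc), so $x$ is a saddle, $f|_P$ is onto $J$, and $\partial P$ splits as two spheres on one side and one on the other, i.e.\ two $S^{m-2}$-fibre components merge into one across $\bar{f}(x)$. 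Passing to the Reeb space: in the first case a single sheet of $W_f$ terminates along the singular value set, so a small regular neighbourhood of $p^{\prime}$ is PL homeomorphic to a closed half-disc whose bounding diameter is the singular value set and meets the regular value set in one component with $2$-disc closure, and $p$ is a definite single point; in the second case the two sheets over one side of the singular arc are glued to the single sheet over the other side along that arc, so a small regular neighbourhood of $p^{\prime}$ is PL homeomorphic to three closed half-discs identified along a common bounding diameter, and meets the regular value set in exactly three components, each with $2$-disc closure. This gives part (2).

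The hard part is this last translation, from the topology of the cobordism $P$ to the precise PL germ of $W_f$ at $p^{\prime}$: one must verify carefully that an index-$0$ fold yields the half-disc germ with component count $1$, that an indefinite fold of generalised pair-of-pants type yields the three-half-disc germ with component count $3$, and above all that the normal spherical and stability hypotheses admit no other local model of a singular value (the singular set lies over at most two points of any value; one fold germ gives the disc or the pair of pants; two fold germs give a double point). The Morse-theoretic bookkeeping on $P$ — that the unique critical point is an extremum exactly when $P$ is a disc, and that the boundary spheres then distribute correctly across $J$ — is the real crux, while the bound of two singular points over a value and the transport through $\Phi$ and $\phi$ are routine.
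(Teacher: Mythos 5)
Your argument is correct, and it is worth noting that the paper itself gives no proof of this Proposition: it is stated as a ``fundamental proposition'' whose verification is delegated to the cited literature on Turaev's shadows and on stable fold maps (\cite{kobayashisaeki}, \cite{saeki}, \cite{saeki2}, \cite{saekisuzuoka}, \cite{martelli}, \cite{turaev}). What you have written is precisely the standard local analysis those references contain: transport to the local model of Definition \ref{def:4}, so that the fibre of $f_p$ over $p$ becomes a connected component $C$ of a fibre of a normal spherical fold map; the normal-crossing property of the restriction of a stable fold map to its one-dimensional singular set, which bounds the number of singular points over any value of a map into the plane by two and yields part (1); and, for part (2), the Morse-theoretic examination of the fibre cobordism $P$ over a short transverse arc, where the dichotomy of Definition \ref{def::1} (disc versus three-holed sphere) forces the unique critical point to be an extremum or a generalized pair-of-pants saddle, producing respectively the half-disc germ (one regular component) or the tripod-times-interval germ (three regular components) of the Reeb space at $p$. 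The only point you pass over lightly is that when the image of the singular set has a crossing at $\bar{f}(p')$ but the two singular points lie in distinct fibre components, one must shrink the neighbourhood of $C$ so that only the fold arc through the singular point of $C$ is visible; this follows from compactness of fibre components and is routine. Your proposal therefore supplies the details the paper leaves implicit rather than departing from its intended argument.
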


\section{Construction of special generic maps.}
We present a main theorem;  the definiton of a $2$-dimensional polyhedron realized as a target space of a pseudo normal spherical fold map {\it compatible with the natural polyhedron} will be explained later in Definition \ref{def:6}
\begin{Thm}
\label{thm:1}
For a pseudo normal spherical fold map $f_p$ on a $4$-dimensional
 closed and connected manifold $M$ onto a $2$-dimensional polyhedron $W_p$ {\rm compatible with the natural polyhedron}, there exists
 a $3$-dimensional compact and connected orientable smooth manifold $W_P$ satisfying the following.
\begin{enumerate}
\item $\partial W_P \neq \emptyset$. 
\item There exists a pseudo special generic map $f_P$ on $M$ onto $W_P$.
\item There exists a continuous map $g:W_P \rightarrow W_p$ satisfying $f_p=g\circ f_P$.
\end{enumerate} 
\end{Thm}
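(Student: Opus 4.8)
The plan is to work locally over the target polyhedron $W_p$ and then glue. Since $f_p$ is a pseudo normal spherical fold map onto a $2$-dimensional polyhedron, the target $W_p$ decomposes into pieces: the regular value set (a $2$-manifold with boundary, over which $f_p$ restricts to an $S^{m-n}=S^2$-bundle), neighborhoods of definite single points (over which the local model is the boundary-fold model of a special generic map), neighborhoods of indefinite single points (where three $2$-disc regions of regular values meet), and neighborhoods of double points. For each stratum I would construct a local model of a $3$-dimensional manifold $W_P^{\mathrm{loc}}$ together with a map onto it from $M^{\mathrm{loc}}=f_p^{-1}(\text{stratum})$ having the pseudo special generic local form, plus a projection down to the corresponding piece of $W_p$. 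Over a regular $2$-disc of $W_p$ the source is $S^2 \times D^2$, and the natural choice is $W_P^{\mathrm{loc}} = S^1 \times D^2$ (or $D^1 \times D^2$ at a definite-type edge), with the map $S^2 \times D^2 \to S^1 \times D^2$ being (the canonical height-type $S^2 \to S^1$, i.e. a Morse function with two index-$0$/$n$ points, in the first factor) times the identity — this is exactly the local pseudo special generic picture of Definition~\ref{def:3}.

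The compatibility hypothesis on $W_p$ (Definition~\ref{def:6}, to be stated) is precisely what should guarantee that these local $3$-dimensional models can be chosen coherently on overlaps, so that the gluing data are consistent; in particular it should control the structure groups of the $S^2$-bundles over the $2$-strata (reducing them to $O(3)$-bundles that lift to $D^3$-bundles, or to the linear $D^3$-bundles demanded near $\partial W_P$) and pin down how the three regular sheets at an indefinite single point and the local branching at a double point are to be thickened in the target direction. Concretely I would: (i) fix the $3$-dimensional thickening over each open $2$-stratum as an interval- or circle-bundle-like object built from the fiberwise Morse function on the $S^2$-bundle; (ii) over neighborhoods of definite single points glue in the standard $D^3$-bundle over $\partial$, matching the pseudo special generic boundary model; (iii) over neighborhoods of indefinite single points and double points insert the corresponding elementary cobordism pieces (the local models coming from the normal spherical condition, i.e. the thrice-punctured $S^2$ cobordism and its analogue) thickened appropriately; (iv) check that on every overlap the two prescriptions agree up to the allowed diffeomorphisms $\Phi$ and PL homeomorphisms $\phi$, using compatibility. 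The map $g:W_P \to W_p$ is then assembled from the local projections, and $f_p = g \circ f_P$ holds stratum by stratum.

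For orientability and connectedness of $W_P$: connectedness follows since $M$ is connected and $f_P$ is surjective with connected fibers in the appropriate sense; orientability I would get from the explicit bundle descriptions (the $D^3$- and $S^1\times D^2$-type pieces are orientable and the gluings can be taken orientation-preserving, again using that the compatibility condition restricts the transition data). That $f_P$ is genuinely \emph{pseudo special generic} in the sense of Definition~\ref{def:3} — i.e. an honest smooth $S^2$-bundle over $W_P \setminus \partial W_P$ and the index-$0$ fold boundary model near $\partial W_P$ — needs to be verified after gluing: smoothing the PL gluings to honest smooth bundle charts is routine once the combinatorics are coherent.

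The main obstacle I expect is step (iv), the coherent gluing: making the finitely many local $3$-dimensional thickenings agree on overlaps is where all the real content sits, because a naive thickening of the $2$-polyhedron $W_p$ need not exist globally, and one must use the compatibility hypothesis on $W_p$ in an essential way to kill the potential obstructions (both the bundle-theoretic ones over the $2$-strata and the combinatorial matching at indefinite single points and double points). This is exactly the point where the technique the author "previously noticed and used in simpler situations" (the Morse-function cases of \cite{kitazawa4}, \cite{kitazawa5}) should be adapted: reduce the gluing to a manageable list of local models and check compatibility case by case.
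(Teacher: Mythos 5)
Your proposal follows essentially the same route as the paper: stratify $W_p$ into neighborhoods of double points, (definite) single points, the $1$-skeleton and the $2$-cells of the regular value set, thicken each piece to a $3$-dimensional local target using fiberwise special generic functions and cobordisms of special generic functions on the $S^2$ fibers, glue using the compatibility hypothesis to control the Y-bundle monodromies and orientability, and extend over the $2$-cells via the surjectivity of ${\pi}_1(SO(2)) \rightarrow {\pi}_1(SO(3))$ together with the fact that ${\rm Diff}(S^2)$ is homotopy equivalent to $O(3)$. The only slip is that the local target over an interior regular $2$-cell should be $D^1 \times D^2$ rather than $S^1 \times D^2$, since a special generic function on $S^2$ has image a closed interval; otherwise your outline matches the paper's five-step construction.
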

\begin{proof}
We construct $f_P$ and $g$ by constructing maps locally and glue them together so that the source manifold of $g$ is orientable through the following five steps. For some figures presented, see
 also the proof of Theorem 4.1 of \cite{saekisuzuoka}; same local models are depicted.
\\
\\
STEP 1 Around a double point.\\
As FIGURE \ref{fig:1} shows, we can construct a local map from a $4$-dimensional compact smooth manifold onto a $3$-dimensional compact and
 orientable manifold by piling two maps each of which is regarded as the product of a cobordism of special generic functions on $S^2$ and
 the identity map on $[-1,1]$. For cobordisms of special generic functions which will be used later again, see \cite{saeki5} for example and see \cite{milnor} for
 h-cobordisms including (h-)cobordisms of homotopy spheres. We also obtain the canonical
 projection from the $3$-dimensional manifold to the small regular neighborhood of the double point.
\begin{figure}
\includegraphics[width=35mm]{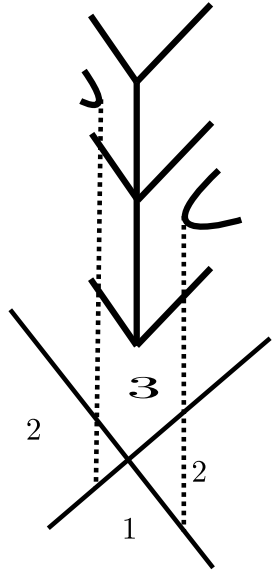}
\caption{Around a double point (with the number of connected components of inverse images).}
\label{fig:1}
\end{figure} 

A {\it Y-bundle} is a bundle whose fiber is a Y-shaped $1$-dimensional polyhedron and we introduce terminologies on monodromies of Y-bundles appearing as subsets of the target spaces of pseudo normal spherical fold maps.

\begin{Def}
\label{def:6}
For an oriented simple loop on the singular value set of a pseudo normal spherical fold map, we can
 canonically obtain a Y-bundle over the loop respecting local forms presented in FIGURE \ref{fig:2} and FIGURE \ref{fig:3}.  
If the structure group or the monodromy is trival, then the monodromy of the Y-bundle is said to be {\it trivial} and if the
 PL homeomorphism corresponding to the monodromy does not fix points except the point in the center, then the monodromy is said to be {\it free}.

Last, if for any oriented simple loop on the singular value set of a pseudo normal spherical fold map, the monodromy of the canonically corresponding Y-bundle is trivial or free, then the target space is said to be {\it compatible with the natural orientation}.
\end{Def}

\begin{figure}
\includegraphics[width=35mm]{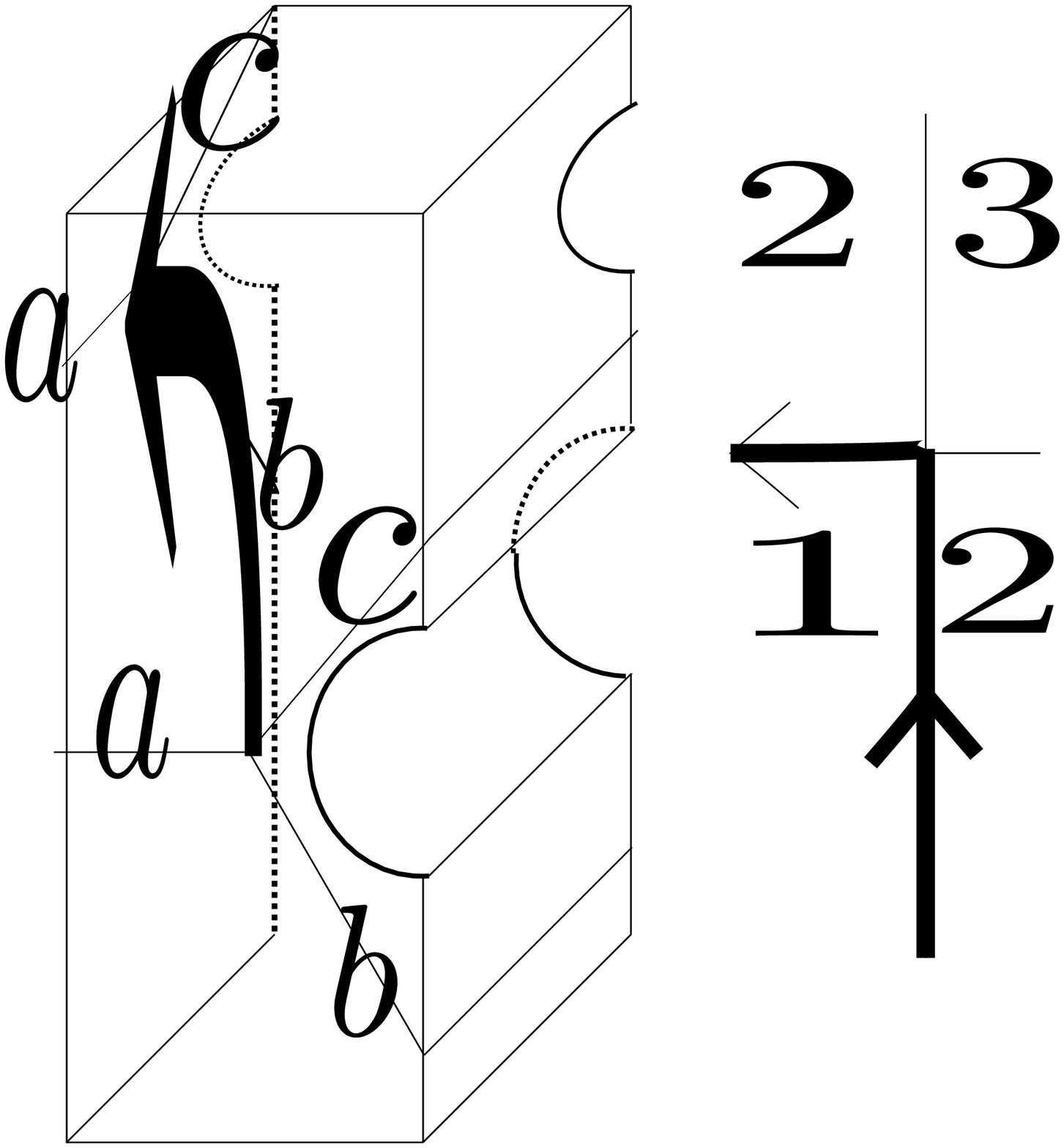}
\caption{A local form of a Y-bundle near a double point (each integer represents the number of connected components of each inverse image).}
\label{fig:2}
\end{figure}
\begin{figure}
\includegraphics[width=35mm]{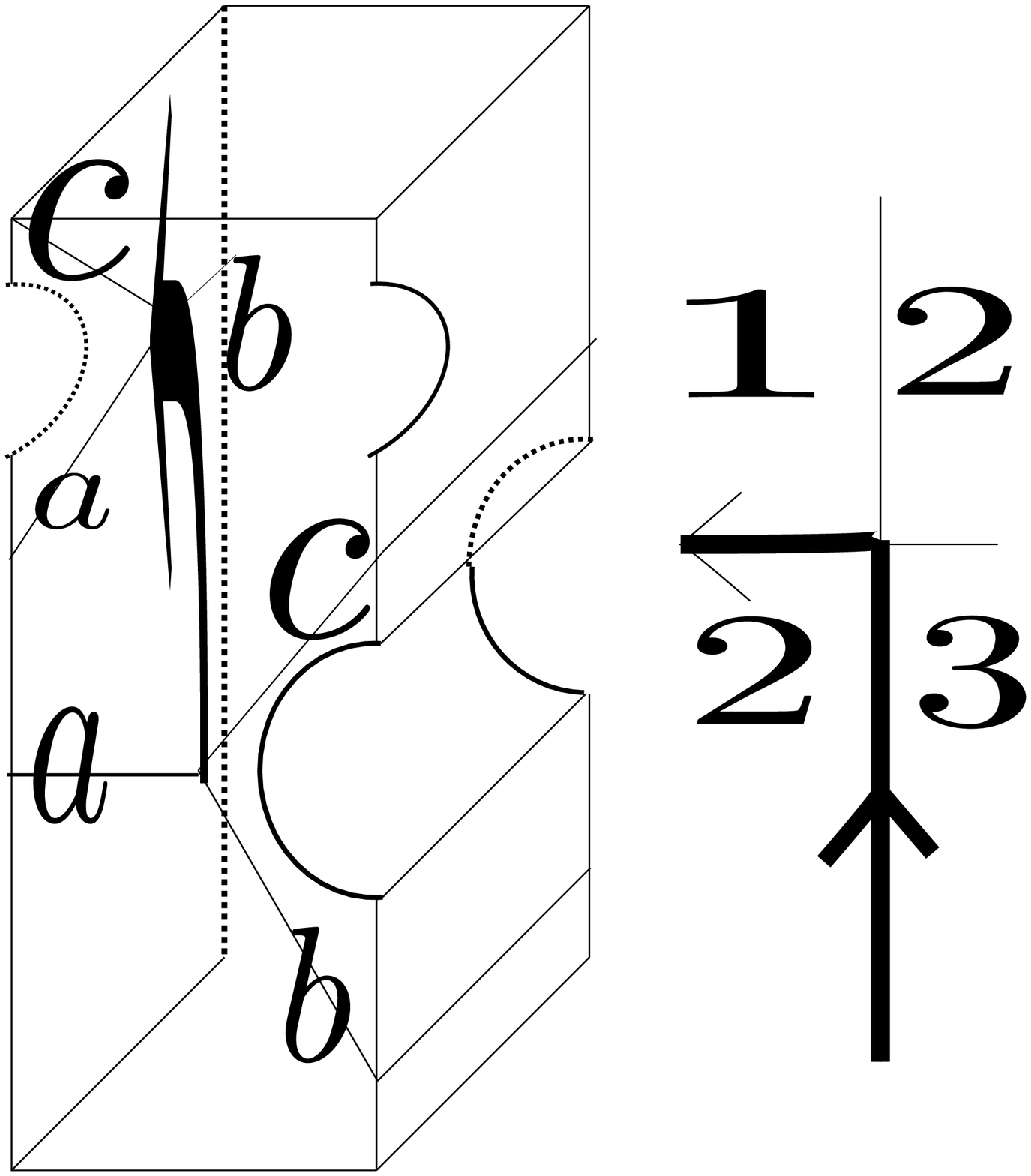}
\caption{A local form of a Y-bundle near a double point (each integer represents the number of connected components of each inverse image).}
\label{fig:3}
\end{figure}

Note that such terminologies are defined for {\it simple polyhedra}, essentially same as target spaces of pseudo normal spherical fold maps and defined in Definition \ref{def:7}.\\
\\
STEP 2 Around a single point.\\
As FIGURE \ref{fig:4} or \ref{fig:5} shows, we can construct a local map from a $4$-dimensional compact smooth manifold onto a $3$-dimensional compact and
 orientable manifold by the product of a cobordism of special generic functions on $S^2$ and
 the identity map on $[-1,1]$.
By the assumption that a small regular neighborhood of the singular value set of the pseudo normal spherical fold map contains no non-orientable surface and
 the cobordism of special generic maps are invariant under a diffeomorphism on
 the whole space smoothly isotopic to a given diffeomorphism sending each connected component of the boundary onto the original component or a component different from
 the, original one we can attach obtained maps and manifolds together with the ones obtained in the previous step compatibly.
\begin{figure}
\includegraphics[width=35mm]{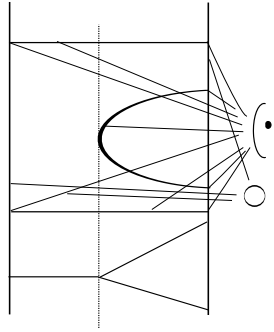}
\caption{Around a single point (points and circles are inverse images of the cobordism and the monodromy of the Y-bundle  considered here is trivial).}
\label{fig:4}
\end{figure} 
\begin{figure}
\includegraphics[width=35mm]{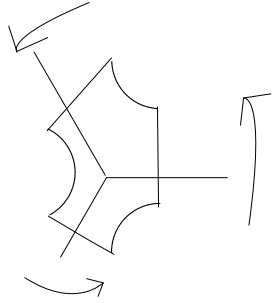}
\caption{Around a single point (the monodromy of the Y-bundle considered here is free).}
\label{fig:5}
\end{figure} 

\noindent STEP 3 Around a definite single point. \\
As FIGURE \ref{fig:6} shows, we can construct a local map from a $4$-dimensional compact smooth manifold onto a $3$-dimensional compact and
 orientable manifold by the product of a cobordism between a special generic function on $S^2$ and a function on the empty set and
 the identity map on $[-1,1]$. By a discussion similar to that of the previous step, we can attach the obtained objects to the constructed ones compatibly. 
\begin{figure}
\includegraphics[width=35mm]{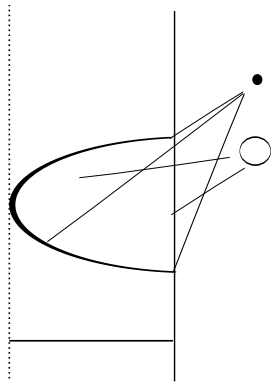}
\caption{Around a normal single point (points and circles are inverse images of cobordism of special generic functions).}
\label{fig:6}
\end{figure} 
\\
\\
We have constructed maps on the singular value set and its small neighborhood.
We construct maps on the remained part of the regular value set by extending the constructed map.\\
\\
STEP 4 Around a $1$-dimensional skelton on the regular value set. \\
We extend the existing map to each $1$-dimensional skelton. To make the resulting $3$-dimensional compact manifold orientable, we need to construct the local map
 as FIGURE \ref{fig:7} shows (see \cite{kitazawa5} for a precise explanation on this part); more precisely, we need to be careful in the case where the regular value set
 contains a non-orientable connected component. 
\begin{figure}
\includegraphics[width=35mm]{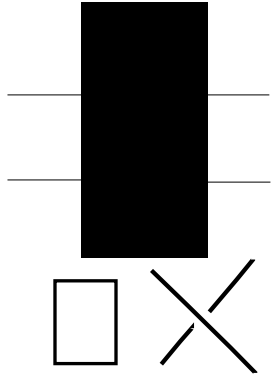}
\caption{Construction around a $1$-dimensional skeleton on the regular value set. Note that we may
 need to twist the target closed interval of the function to make the obtained $3$-dimensional manifold orientable and the lower two figures represent the local form of the black part.}
\label{fig:7}
\end{figure} 
\\
\\
STEP 5 Around a $2$-dimensional skelton on the regular value set. \\
On each $2$-dimensional cell $D$, we can construct a product of a special generic function on $S^2$ and the identity map on the cell $D$. Consider the attaching
 map on the boundary $\partial D \times S^2$. It is a bundle isomorphism on the trivial $S^2$-bundle over $\partial D^2$. Moreover, it is regarded as a bundle
 isomorphism preserving the Morse functions since the natural homomorphism ${\pi}_1(SO(2)) \rightarrow {\pi}_1(SO(3))$ is known to be surjective and
 the diffeomorphism groups of $S^1$ and $S^2$ are regarded as linear or homotopy equivalent to $O(2)$ and $O(3)$, respectively. The discussion enables us to extend
 the constructed maps and obtain a $3$-dimensional compact and connected orientable manifold. See also FIGURE \ref{fig:8}. \\
\begin{figure}
\includegraphics[width=35mm]{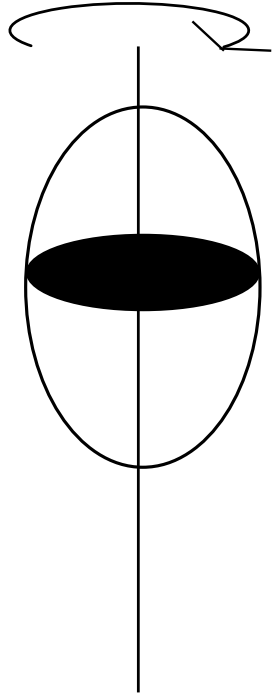}
\caption{Construction around a $2$-dimensional skeleton on the regular value set.}
\label{fig:8}
\end{figure} 

We can obtain the resulting maps and manifolds.
\end{proof}
\begin{Rem}
\label{rem:1}
We note about Shibata's work \cite{shibata}, in which essentially same figures as some figures in the proof of Theorem \ref{thm:1} appear.
Shibata investigated the condition for a smooth map on a $3$-dimensional compact and orientable smooth manifold with non-empty boundary into the plane to be
 represented as the composition of an immersion into ${\mathbb{R}}^3$ and the canonical projection onto the plane. In the proof, as a partially similar but essentially
 different work, we construct a $3$-dimensional compact and orientable smooth manifold and a continuous map onto a given $2$-dimensional polyhedron realized as the target space
 of a pseudo normal spherical fold map compatible with the natural orientation. For example, note that the $2$-dimensional polyhedron may not be realized as the Reeb space of a spherical fold map into the plane; consider the case where the regular value set contains a non-orientable connected component.
\end{Rem}
\begin{Rem}
\label{rem:2}
In the present paper, as fold maps, stable fold maps are considered. Generally, a {\it stable} map is defined
 as a map such that by slight perturbations, the resulting maps are always $C^{\infty}$ {\it equivalent} to the original map; as another
 definition, it is defined as a smooth map such that there exists an open neighborhood of the map and the maps there are always $C^{\infty}$ equivalent to the original one
 where the topology of the spaces consisting of all smooth maps is the $C^{\infty}$ Whitney topology. 

Two smooth maps $f_1$ and $f_2$ are said to be $C^{\infty}$ {\it equivalent} if there exists a pair $(\Phi,\phi)$ of diffeomorphisms of the source manifolds and of the target manifolds and the relation $\phi \circ f_1=f_2 \circ \Phi$ holds. 

If a stable map from a closed manifold of dimension larger than $1$ into the plane is stable, then each singular point is of the same form as a that of a fold map or a {\it cusp}. 
The set of all singular points whose forms are same as that of a fold map and that of a singular point of a fixed index is a smooth submanifold of codimension $1$ and the
 number of cusps is finite. Moreover, if the restriction of the map to the set of all the singular points whose form is same as that of a fold map is transversal and at distinct
 cusps, the values are different and the inverse image of the value of each cusp has only one singular point, then the map is stable and stable maps into the plane on manifolds whose dimensions are larger than $1$ always have such differential topological properties. 

For these elemental terminologies and facts,  see \cite{golubitskyguillemin} for example. For stable maps into the plane, see also \cite{thom} and \cite{levine} for example.

Moreover, we can define the extension of pseudo normal spherical fold map considering not only spherical fold maps but also stable maps such that inverse images satisfy
 same conditions and we can prove Theorem \ref{thm:1}: we must consider about cusps and for this, see FIGURE \ref{fig:9} and also Figure 6 and Figure 7 of \cite{saekisuzuoka} 

\begin{figure}
\includegraphics[width=35mm]{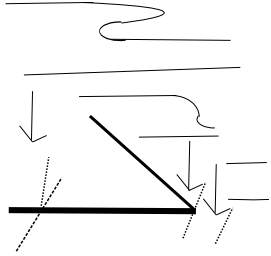}
\caption{Around a point corresponding to a cusp (the inverse image of a point in the three depicted surfaces in the upper part is a point or a circle).}
\label{fig:9}
\end{figure} 
\end{Rem}
\begin{Cor}
\label{cor:1}
A $4$-dimensional closed and connected manifold $M$ admitting a pseudo normal spherical fold map onto a $2$-dimensional polyhedron compatible with the natural orientation admits a special generic map into ${\mathbb{R}}^3$. Moreover, by composing a canonical projection onto the plane, we have a normal spherical fold map into the plane and the resulting Reeb space is compatible with the natural orientation.
\end{Cor}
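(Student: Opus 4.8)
The plan is to feed $f_p$ into Theorem \ref{thm:1} and then push the resulting $3$-dimensional manifold first into ${\mathbb{R}}^3$ by an immersion and, for the second assertion, onto the plane by the canonical projection. Concretely, first I would apply Theorem \ref{thm:1} to $f_p\colon M\to W_p$ and keep the data it produces: a compact, connected, orientable $3$-dimensional manifold $W_P$ with $\partial W_P\neq\emptyset$, a pseudo special generic map $f_P\colon M\to W_P$, and a continuous map $g\colon W_P\to W_p$ with $f_p=g\circ f_P$. Since $W_P$ is a compact orientable $3$-dimensional manifold with non-empty boundary, it is parallelizable, so by the Hirsch immersion theorem its interior --- hence also $W_P$ itself, after restricting to a compact core obtained by removing an open collar of $\partial W_P$ --- admits a generic immersion $\iota\colon W_P\to{\mathbb{R}}^3$. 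By the observation recorded right after Definition \ref{def:3}, the composition $F:=\iota\circ f_P\colon M\to{\mathbb{R}}^3$ is then a special generic map, which establishes the first assertion of the corollary.

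For the ``moreover'' part I would choose $\iota$ so that $F':=\pi\circ F\colon M\to{\mathbb{R}}^2$, with $\pi\colon{\mathbb{R}}^3\to{\mathbb{R}}^2$ the canonical projection, is generic, and I would invoke the Saeki--Suzuoka analysis of projections of special generic maps \cite{saekisuzuoka} (together with the cuspidal refinement indicated in Remark \ref{rem:2}) to see that $F'$ is a normal spherical fold map. The mechanism is transparent: for a regular value $y$ of $F'$ one has $(F')^{-1}(y)=F^{-1}(\ell_y)$ for the line $\ell_y:=\pi^{-1}(y)$, and since $\iota$ restricted to the $1$-manifold $\iota^{-1}(\ell_y)$ is an immersion into $\ell_y\cong{\mathbb{R}}$, that $1$-manifold has no closed component, so it is a disjoint union of arcs with endpoints on $\partial W_P$; capping off, over each such arc, the $S^1$-bundle defining $F$ with the fibre discs lying over $\partial W_P$ shows that $(F')^{-1}(y)$ is a disjoint union of $2$-spheres, and a local inspection near the singular set yields exactly the two local models of Definition \ref{def::1}, the standard $3$-disc and the $3$-sphere with three disjoint open discs removed.

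It remains to identify the Reeb space $W_{F'}$ and to verify its compatibility with the natural orientation. Since $F^{-1}(q)\subseteq F^{-1}(\ell)$ whenever $q$ lies on the line $\ell$, the quotient map onto $W_{F'}$ factors through $q_F=f_P$, and $W_{F'}$ is canonically identified with the Reeb space of the generic map $\pi\circ\iota\colon W_P\to{\mathbb{R}}^2$ of the compact orientable $3$-dimensional manifold $W_P$ into the plane --- precisely the situation of Shibata's work mentioned in Remark \ref{rem:1}. The hard part, and the main obstacle, is to turn the orientability of $W_P$ into compatibility of $W_{F'}$: for any oriented simple loop on the singular value set of $F'$, the monodromy of the canonically associated Y-bundle must be trivial or free, for a monodromy that is neither would be realised by a PL homeomorphism of the fibre Y fixing one whole edge, and --- exactly as in STEP 4 of the proof of Theorem \ref{thm:1} and in Remark \ref{rem:1} --- a Y-bundle carrying such a monodromy cannot be swept out inside an orientable $3$-dimensional manifold, contradicting the orientability of $W_P$. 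Hence every such monodromy is trivial or free, so $W_{F'}$ is compatible with the natural orientation, and the proof is complete. The delicate point is this dictionary of STEP 4 between orientability of the intermediate $3$-dimensional manifold and the admissible monodromies; the assertion that the cusps which appear in $F'$ are harmless, in the sense of the extension of Remark \ref{rem:2}, likewise rests on the Saeki--Suzuoka analysis rather than on anything elementary.
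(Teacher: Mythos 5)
Your argument for the first assertion is exactly the paper's: apply Theorem \ref{thm:1}, observe that a compact orientable $3$-dimensional manifold with non-empty boundary is parallelizable and hence immersible in ${\mathbb{R}}^3$, and compose with $f_P$ using the remark after Definition \ref{def:3}. That part is correct and matches the paper's (very terse) proof.

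For the ``moreover'' part there is one genuine gap. The corollary asserts a normal spherical \emph{fold} map into the plane, and Definition \ref{def::1} requires a stable fold map, i.e.\ no cusps. A generic composition $\pi\circ\iota\circ f_P$ of a special generic map into ${\mathbb{R}}^3$ with the projection to the plane does in general acquire cusps, and you acknowledge this but then only declare the cusps ``harmless'' in the sense of the extended (stable-map) class of Remark \ref{rem:2}. That gives you a spherical \emph{stable} map, not the fold map the statement claims. The paper closes this by actually eliminating the cusps: by \cite{levine} the cusps of a map from a $4$-dimensional closed manifold into the plane can be removed by homotopy provided their number is even, and this parity is controlled by the Euler characteristic of $M$, which is even here (by Proposition 2, ${\rm rank}\, H_2(M;\mathbb{Z})$ is twice ${\rm rank}\, H_2(W_p;\mathbb{Z})$, so $\chi(M)=2-2b_1+b_2$ is even). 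Without this step --- or some substitute for it --- your $F'$ need not lie in the class named in the corollary. The rest of your discussion (sphere fibers over regular values via the arcs $\iota^{-1}(\ell_y)$, identification of $W_{F'}$ with the Reeb space of $\pi\circ\iota$ as in \cite{saekisuzuoka} and \cite{shibata}, and the monodromy/orientability dictionary from STEP 4) is consistent with what the paper invokes, and is in fact spelled out in more detail than the paper provides.
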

\begin{proof}
As Theorem \ref{thm:1}, we can obtain a pseudo special generic map onto a $3$-dimensional orientable manifold with non-empty boundary. By a well known fact, we can immerse
 the target manifold into ${\mathbb{R}}^3$ and we have a special generic map. The latter follows from fundamental theory of \cite{saekisuzuoka} and \cite{shibata}. To remove
 cusps mentioned in Remark \ref{rem:2}, apply the theory of \cite{levine} and the fact that the Euler number of the source manifold is even.
\end{proof}

\begin{Prop}[\cite{kitazawa}, \cite{kitazawa2} and \cite{saeki} etc.]
\label{prop:2}
A pseudo normal spherical fold map $f_p$ from a $4$-dmensional closed and connected manifold $M$ onto a $2$-dimensional polyhedron $W_p$ induces isomorphisms
 between the 1st homology and homotopy groups. Moreover, the group $H_2(W_f;\mathbb{Z})$ is free and the rank
 of the group $H_2(M;\mathbb{Z})$ is twice the rank of that of $H_2(W_f;\mathbb{Z})$.
If the dimension of the source manifold is $m>3$, then a pseudo normal spherical fold map $f_p$ from an $m$-dimensional closed and connected manifold $M$ onto a $2$-dimensional polyhedron $W_p$ induces isomorphisms
 between the $k$-th homology and homotopy groups for $0 \leq k \leq m-3$.
\end{Prop}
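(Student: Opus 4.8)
The plan is to study the triangulable map $f_p\colon M\to W_p$ via its local normal forms over a stratification of the $2$-dimensional polyhedron $W_p$, in the spirit of the analysis of special generic and normal spherical fold maps by Saeki and Saeki--Suzuoka. The decisive numerical fact is that the generic fibre $S^{m-2}$ of $f_p$ is $(m-3)$-connected while $\dim W_p=2$, so for $m>4$ we stay strictly below the critical range in every degree $\le m-3$, whereas for $m=4$ the degree $m-n=2$ is exactly critical, which is where the doubling of the rank of $H_2$ originates.

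First I would record the local models, all already implicit in Definition~\ref{def::1} and in the proof of Theorem~\ref{thm:1}: over a regular value $f_p$ restricts to a smooth $S^{m-2}$-bundle; over a small neighbourhood of a definite single point, a single point, a double point (and, in the extended setting, a cusp) the preimage is, up to the standard projection, one of the compact pieces $D^{m-n+1}$, $S^{m-n+1}\setminus(\text{three discs})$, or a pile of two of these, carrying the corresponding Y-bundle data; in every case the preimage of a sufficiently small neighbourhood is simply connected and $(m-3)$-connected. Feeding this into the Leray spectral sequence $E_2^{i,j}=H^i(W_p;R^jf_{p*}\mathbb{Z})\Rightarrow H^{i+j}(M;\mathbb{Z})$: connectedness of fibres gives $R^0f_{p*}\mathbb{Z}=\mathbb{Z}_{W_p}$, the connectivity of the local preimages gives $R^jf_{p*}\mathbb{Z}=0$ for $0<j<m-2$, and $\dim W_p=2$ kills $E_2^{i,j}$ for $i>2$. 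A short degree count, together with the observation that for $k\le m-3$ no differential can enter or leave $E^{k,0}$, shows $E_\infty^{k,0}=E_2^{k,0}=H^k(W_p)$ and $E_\infty^{i,j}=0$ for $i+j\le m-3$ with $j>0$; hence the edge homomorphism $f_p^{*}\colon H^k(W_p)\to H^k(M)$ is an isomorphism for $k\le m-3$, and dually $f_{p*}\colon H_k(M)\to H_k(W_p)$ is an isomorphism in those degrees. (Equivalently one may argue by Mayer--Vietoris over a cell decomposition of $W_p$ subordinate to the stratification, using $(m-3)$-connectedness of the pieces.)

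For the fundamental group I would use van Kampen applied to the cover of $M$ by the preimages of neighbourhoods of the strata of $W_p$: each piece is either an $S^{m-2}$-bundle over a subpolyhedron of $W_p$ --- and $S^{m-2}$ is simply connected since $m\ge 4$ --- or one of the simply connected local models listed above, so $f_p$ induces an isomorphism $\pi_1(M)\cong\pi_1(W_p)$ (surjectivity already follows from connectedness of fibres). When $m>4$ one then passes to the common universal cover $\widetilde f_p\colon\widetilde M\to\widetilde W_p$, which has the same local structure since it is pulled back along a covering, applies the homology comparison above upstairs, and invokes the relative Hurewicz theorem (note $H_{m-2}(\widetilde W_p)=0$ for dimension reasons when $m>4$, so the $H_{m-2}$-surjectivity needed for the cofibre argument is automatic) to upgrade the homology isomorphisms to $\pi_k(M)\cong\pi_k(W_p)$ for $1\le k\le m-3$. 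For $m=4$ this reproduces only the case $k=1$, as the statement requires.

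It remains to treat degree $2$ for $m=4$. Freeness of $H_2(W_p;\mathbb{Z})$ is automatic, $W_p$ being a $2$-dimensional polyhedron, so $H_2(W_p;\mathbb{Z})$ is the group of cellular $2$-cycles, a subgroup of a free abelian group. For the rank, the spectral sequence at total degree $2$ (now $m-2=2$, with $E_2^{1,1}=0$ because the local preimages are simply connected, hence $R^1f_{p*}\mathbb{Q}=0$) yields a short exact sequence $0\to H^2(W_p;\mathbb{Q})\to H^2(M;\mathbb{Q})\to H^0(W_p;R^2f_{p*}\mathbb{Q})\to 0$, so $\mathrm{rank}\,H_2(M)=\mathrm{rank}\,H_2(W_p)+\dim_{\mathbb{Q}}H^0(W_p;R^2f_{p*}\mathbb{Q})$. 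Here $R^2f_{p*}\mathbb{Q}$ is the constructible sheaf that on the regular value set is the rank-one fibrewise orientation local system of the $S^2$-bundle and that degenerates over the singular value set according to the Y-bundle monodromies. Thus the point to prove is the identity $\dim_{\mathbb{Q}}H^0(W_p;R^2f_{p*}\mathbb{Q})=\mathrm{rank}\,H_2(W_p)$, and this I regard as the main obstacle: the left-hand side genuinely depends on the combinatorics of the singular value set (the numbers of indefinite single points and double points and the way they are linked), and establishing the equality requires the structure theory for pseudo normal spherical fold maps and simple polyhedra in \cite{saekisuzuoka}, \cite{kitazawa} and \cite{kitazawa2}. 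Concretely I would identify the global sections of $R^2f_{p*}\mathbb{Q}$ with the cycle data of the simple polyhedron $W_p$ --- equivalently, with the degree-$2$ part of a Poincar\'e--Lefschetz-type duality for $W_p$ with coefficients in the fibrewise orientation sheaf, transported from the $3$-dimensional manifold that $W_p$ presents (cf. Theorem~\ref{thm:1}) --- which gives the stated doubling of the rank at once.
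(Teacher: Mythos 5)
Your spectral-sequence scheme for the range $0\le k\le m-3$ is sound and is in fact more explicit than anything in the paper, which offers no proof of Proposition~\ref{prop:2} at all: Remark~\ref{rem:3} simply defers to the corresponding statement for spherical stable maps into surfaces in \cite{saekisuzuoka} and asserts that the proof there carries over. The local models and their $(m-3)$-connectivity, the vanishing $R^jf_{p*}\mathbb{Z}=0$ for $0<j<m-2$, the degree count killing all terms off the bottom row in total degree $\le m-3$, the van Kampen computation of $\pi_1$, the universal-cover/relative-Hurewicz upgrade for $m>4$, and the freeness of $H_2(W_p;\mathbb{Z})$ are all correct in outline. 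One technical caveat: the cohomological Leray argument controls $H^k$ for $k\le m-3$ only, so ``dually'' does not recover the torsion of $H_{m-3}(M)$ (which lives in $H^{m-2}$, outside the controlled range); to get the stated homology isomorphisms you should run the homological version of the spectral sequence for the covering by preimages of stars, or the Mayer--Vietoris argument you mention parenthetically, rather than dualize.

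The genuine gap is exactly where you flag it. The identity $\dim_{\mathbb{Q}}H^0\bigl(W_p;R^2f_{p*}\mathbb{Q}\bigr)=\operatorname{rank}H_2(W_p;\mathbb{Q})$ carries the entire content of the rank-doubling claim for $m=4$, and it is asserted, not proved; everything before it is a (correct) reduction, not a resolution. The closing sketch is moreover problematic on two counts. First, identifying global sections of $R^2f_{p*}\mathbb{Q}$ with cellular $2$-cycles of $W_p$ requires matching the fibrewise orientation local system of the $S^2$-bundle over the regular strata with the orientation data entering the cellular boundary map of $W_p$, and checking that the section conditions at indefinite fold circles and double points (where the stalks are $H^2(S^2\vee S^2)\cong\mathbb{Q}^2$ and larger) coincide, signs included, with the cycle condition; neither $M$ nor the regular strata are assumed orientable, so this is a genuine verification, not a formality. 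Second, the proposed appeal to ``the $3$-dimensional manifold that $W_p$ presents (cf.\ Theorem~\ref{thm:1})'' is unavailable in the stated generality: Theorem~\ref{thm:1} requires $W_p$ to be compatible with the natural orientation, a hypothesis Proposition~\ref{prop:2} does not impose, so the duality you want to transport need not exist. To close the gap one must carry out the stalkwise computation of $R^2f_{p*}\mathbb{Q}$ over each type of singular value and verify the equality directly --- which is, in substance, the structure-theoretic Mayer--Vietoris computation of \cite{saekisuzuoka} that the paper is citing in place of a proof.
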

\begin{Rem}
\label{rem:3}
The proposition above is proven as a proposition for spherical fold maps or more generally, {\it spherical} stable maps explained also in Remark \ref{rem:2} from $4$-dimensional closed and connected manifolds into $2$-dimensional manifolds with no boundaries in \cite{saekisuzuoka}. However, by the manner of
 the proof, we have the mentioned result.
\end{Rem}
This means that in the situation of Theorem \ref{thm:1} and Corollary \ref{cor:1}, for the target Reeb spaces or
 the target pseudo quotient spaces for given manifolds, 1st homology and homotopy groups and the ranks of 2nd homology groups are invariant.

In Example \ref{ex:3} and Example \ref{ex:4}, we introduce simple examples.
The numbers in each connected component of the regular value set in the figures represents the numbers of the inverse images of the connected components.
\begin{Ex}
\label{ex:3}
FIGURE \ref{fig:10} shows two kinds of fold maps. They are round fold maps, defined as fold maps whose singular value sets are embedded concentric spheres and introduced and systematically studied in \cite{kitazawa} and \cite{kitazawa2} for example. The source manifolds are $S^4$ and the total space of an $S^2$-bundle over $S^2$, respectively. Note that both trivial and non-trivial bundles admit fold maps like ones the right figure shows. We can apply Theorem \ref{thm:1} to the maps.
FIGURE \ref{fig:11} show a fold map which is not round. The source manifold is represented as a connected sum of two total spaces of $S^2$-bundles over $S^2$. We can apply Theorem \ref{thm:1} to this case.
\begin{figure}
\includegraphics[width=35mm]{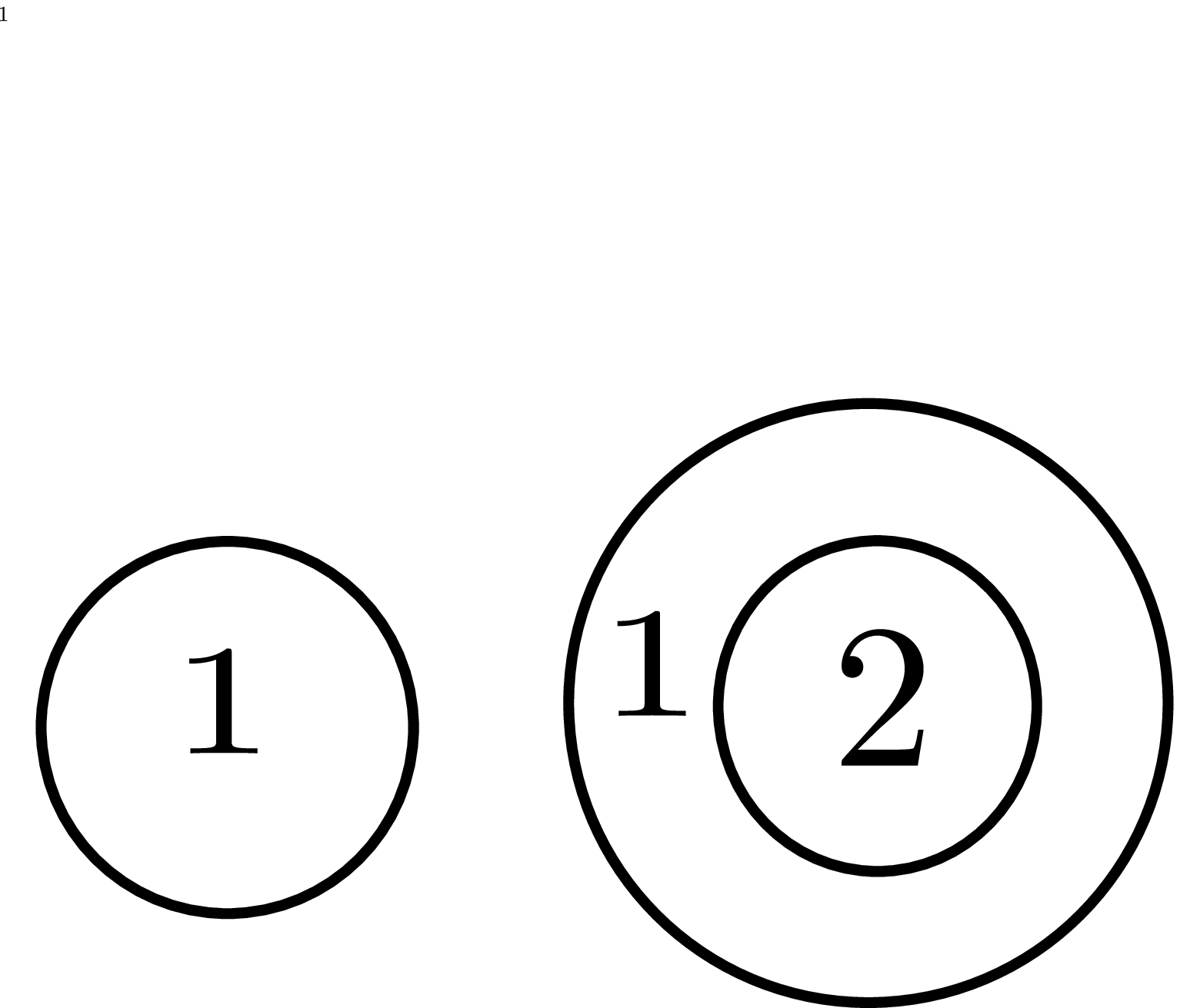}
\caption{Two round fold maps.}
\label{fig:10}
\end{figure} 
\begin{figure}
\includegraphics[width=35mm]{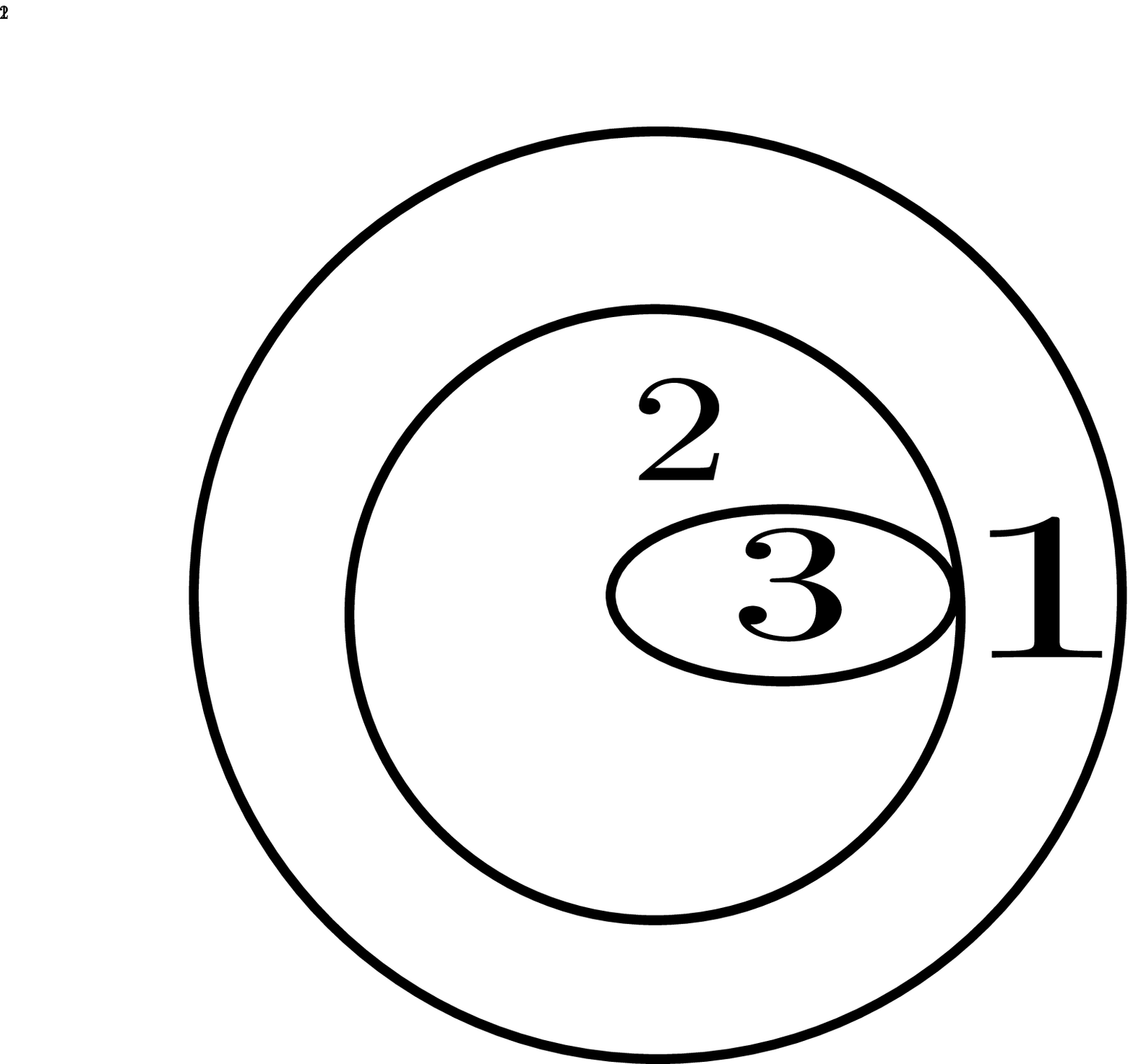}
\caption{Another example.}
\label{fig:11}
\end{figure} 
\end{Ex}
\begin{Ex}[\cite{suzuoka}]
\label{ex:4}
An example by Suzuoka dipicted as FIGURE \ref{fig:12} is studied in \cite{suzuoka}. The source manifold is $S^2 \times S^2$ or the total
 space of a non-trivial $S^2$-bundle over $S^2$. We also note that these manifolds admit fold maps as dipicted. We can apply Theorem \ref{thm:1} to this case.
\begin{figure}
\includegraphics[width=60mm]{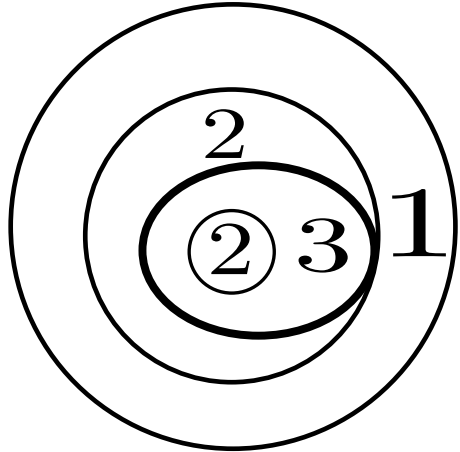}
\caption{Suzuoka`s example of spherical fold maps}
\label{fig:12}
\end{figure} 
\end{Ex}

\section{Differential topological meanings of the result and related facts and problems.}
\subsection{Manifolds admitting special generic maps}
As mentioned in the abstract and the introduction or section 1, manifolds admitting special generic maps into Euclidean spaces are often restricted if the dimesions of the target spaces
 are not so low.
For example, it is known that if a homotopy sphere of dimension $m>3$ admits a special generic map into ${\mathbb{R}}^n$ satisfying $n=m-1,m-2,m-3$, then it is the standard sphere.
For the $4$-dimensional case, first, a $4$-dimensional closed and connected manifold admits a special generic map into the plane if and only if the manifold is represented as a connected sum of the total
 spaces of $S^3$-bundles over $S^1$ ($S^4$ is also included in the trivial case). Moreover, a $4$-dimensional closed and connected manifold whose fundamental group is represented as a free
 product of a finite number of $\mathbb{Z}$ admits a special generic map into ${\mathbb{R}}^3$ if and only if it is represented as a connected sum of the total spaces of $S^3$-bundles over $S^1$ and $S^2$-bundles over $S^2$ ($S^4$ is included). 
According to an informal comment by Saeki in 2010, the following conjecture on the rigidity of diffeomorphism types of $4$-dimensional manifolds admitting special generic maps seems to be true.
\begin{Conj}
If two $4$-dimensional homeomorphic closed and connected manifolds admit special generic maps into an Euclidean space of dimension smaller than $4$, then they are diffeomorphic.
\end{Conj}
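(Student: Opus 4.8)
The plan is to separate the argument according to the dimension $n \in \{1,2,3\}$ of the target Euclidean space, and in each case to combine the relevant classification of $4$-manifolds admitting a special generic map into $\mathbb{R}^{n}$ (recalled above) with a \emph{rigidity} statement: two homeomorphic manifolds on the resulting list are diffeomorphic. Taking the classifications as given, the task reduces to verifying, list by list, that the homeomorphism type of a member determines its diffeomorphism type.

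The case $n=1$ is essentially vacuous: a special generic map on a closed connected $4$-manifold $M$ into $\mathbb{R}$ is a Morse function whose only critical points are a single maximum and a single minimum, so $M = D^{4} \cup_{\phi} D^{4}$ is a twisted $4$-sphere for some $\phi \in \mathrm{Diff}(S^{3})$; since $\pi_{0}\mathrm{Diff}(S^{3}) \cong \mathbb{Z}/2$ by Cerf's theorem, $M$ is the standard $S^{4}$, and any two such manifolds are diffeomorphic. The case $n=2$ is bookkeeping: by the classification, $M$ is a connected sum of total spaces of $S^{3}$-bundles over $S^{1}$ (with $S^{4}$ the empty sum), and its homeomorphism type records its orientability together with the rank $a$ of $\pi_{1}(M) \cong F_{a}$; using the relation $N \# (S^{1} \times S^{3}) \cong N \# (S^{1} \widetilde{\times} S^{3})$, valid whenever $N$ is non-orientable, every such connected sum is diffeomorphic to $\#^{a}(S^{1} \times S^{3})$ in the orientable case and to $\#^{a}(S^{1} \widetilde{\times} S^{3})$ in the non-orientable case, so $(a,\text{orientability})$ determines $M$ up to diffeomorphism.

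For $n=3$, granting the classification, write $M = A \# B$ and $M' = A' \# B'$, where $A, A'$ are connected sums of total spaces of $S^{3}$-bundles over $S^{1}$ and $B, B'$ are simply connected connected sums of total spaces of $S^{2}$-bundles over $S^{2}$ (again allowing empty sums). A homeomorphism $M \cong M'$ matches the rank of $\pi_{1}$ and the orientability; since $\pi_{1}(M) \cong \pi_{1}(A)$ and $M$ is orientable if and only if $A$ is, the case $n=2$ yields a diffeomorphism $A \cong A'$. Since $H_{2}(A;\mathbb{Z}) = 0 = H_{2}(A';\mathbb{Z})$, the intersection form of $M$ is that of $B$, so the homeomorphism forces $B$ and $B'$ to have isometric intersection forms. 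Among connected sums of $S^{2}$-bundles over $S^{2}$ the intersection form determines the manifold up to diffeomorphism: an even form forces every summand to be $S^{2} \times S^{2}$, whereas once one summand is the twisted bundle $S^{2} \widetilde{\times} S^{2} \cong \mathbb{CP}^{2} \# \overline{\mathbb{CP}^{2}}$ the remaining summands can be absorbed using Wall's diffeomorphism $(S^{2} \times S^{2}) \# (S^{2} \widetilde{\times} S^{2}) \cong \#^{2}(S^{2} \widetilde{\times} S^{2})$; hence $B \cong B'$. Combining the two diffeomorphisms gives $M = A \# B \cong A' \# B' = M'$.

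I expect the main obstacle to be that this entire scheme rests on a classification of the source manifolds which, for $n=3$, is currently available only under the hypothesis that $\pi_{1}(M)$ is a free product of copies of $\mathbb{Z}$, and that hypothesis is genuinely restrictive. A compact orientable $3$-manifold with non-empty boundary that immerses into $\mathbb{R}^{3}$ need not have free fundamental group---for instance, $T^{3}$ with an open ball removed, whose interior is parallelizable and hence immerses into $\mathbb{R}^{3}$ by the Hirsch--Smale immersion theorem---and the boundary of a linear $D^{2}$-bundle over such a manifold is a closed $4$-manifold with non-free $\pi_{1}$ which admits a special generic map into $\mathbb{R}^{3}$ but is not a connected sum of $S^{3}$-bundles over $S^{1}$ and $S^{2}$-bundles over $S^{2}$. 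A complete proof therefore seems to require first classifying the $4$-manifolds admitting a special generic map into $\mathbb{R}^{3}$ for arbitrary fundamental group---equivalently, determining which linear $D^{2}$-bundles over which compact $3$-manifolds immersed in $\mathbb{R}^{3}$ occur, and how the resulting closed $4$-manifolds decompose---and then establishing, for every manifold on this larger list, that its diffeomorphism type is determined by its homeomorphism type. For the aspherical-flavoured manifolds that arise in this way, that rigidity statement is of a subtler and harder nature than the elementary bookkeeping which suffices for the completely decomposable manifolds treated in the first three paragraphs, and it is there, rather than in those reductions, that I expect the real difficulty to lie.
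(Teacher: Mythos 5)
You should be aware that the statement you are trying to prove is labelled a \emph{Conjecture} in the paper: it is attributed to an informal comment of Saeki and no proof is given there, so there is no ``paper's proof'' to compare against. Your write-up is therefore best read not as a proof but as a reduction of the conjecture to its genuinely open part, and on that reading it is essentially correct. The $n=1$ case (Reeb--Cerf, twisted spheres are standard), the $n=2$ case (Saeki's classification into connected sums of $S^3$-bundles over $S^1$, plus the sliding trick $N\#(S^1\times S^3)\cong N\#(S^1\widetilde{\times}S^3)$ for non-orientable $N$), and the $n=3$ case \emph{under the hypothesis that $\pi_1$ is a free product of copies of $\mathbb{Z}$} (splitting off the $1$-handle part, then using Wall's diffeomorphism $(S^2\times S^2)\#(S^2\widetilde{\times}S^2)\cong\#^2(S^2\widetilde{\times}S^2)$ to see that the intersection form determines the simply connected part) are all sound and consistent with the classifications the paper quotes.

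The genuine gap is exactly the one you name in your final paragraph: for $n=3$ the classification of source manifolds is only known when $\pi_1(M)$ is a free product of copies of $\mathbb{Z}$, and a special generic map into $\mathbb{R}^3$ only forces $M$ to be the boundary of a linear $D^2$-bundle over a compact $3$-manifold immersed in $\mathbb{R}^3$; such $3$-manifolds need not have free fundamental group (your $T^3$ minus a ball example is apt), so the list of possible source manifolds is not under control and no rigidity argument is available for them. Since that is precisely the content that would be needed to settle the statement, your argument does not close the conjecture, and you should present it as a conditional result (``the conjecture holds for manifolds whose fundamental group is a free product of copies of $\mathbb{Z}$'') rather than as a proof of the statement as written.
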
 
Theorem \ref{thm:1} implies that the class of source manifolds of maps of the considered class, which seems to be wider than the class of $4$-dimensional closed and connected
 manifolds admitting special generic maps, may not be so large. In fact, Saeki and Suzuoka \cite{saekisuzuoka} presented a problem on the diffeomorphism types of $4$-dimensional closed manifolds
 admitting spherical fold maps into the plane questioning whether a manifold homeomorphic and not diffeomorphic to a manifold represented as connected sums of the
 total spaces $S^2$-bundles
 over $S^2$ such as the Moishezon-Teicher surface with zero signature admits a spherical fold map into the plane: according to Theorem \ref{thm:1}, it is false under the assumption that the resulting quotient map onto the Reeb space is standard. 
See also \cite{saekisakuma} and \cite{saekisakuma2} for such topics on diffeomorphism types.

Note that this is false for pairs of general dimensions. Every homotopy sphere of dimension larger than $1$ except $4$-dimensional exotic homotopy spheres
 admits a special generic map into the plane (\cite{saeki2}). 

Last, as a related result, recently, Wrazidlo \cite{wrazidlo} has shown that $7$-dimensional oriented homotopy spheres whose oriented diffeomorphism types
 belong to a family of $14$ types of 28 types do not admit special generic maps into ${\mathbb{R}}^3$.

\subsection{$2$-dimensional pseudo quotient spaces called simple polyhedra including the
 cases of Turaev`s shadows.}
\begin{Def}
\label{def:7}
A {\it simple}
polyhedron is a $2$-dimensional polyhedron locally PL homeomorphic to a closed neighborhood of a point in the target space of a pseudo normal spherical fold map on $4$-dimensional closed manifold onto the $2$-dimensional polyhedron. 
\end{Def}

\begin{Rem}
\label{rem:4}
We can replace the dimension $4$ by any other integer larger than $4$. We cannot replace it by $2$ or $3$ : for the $3$-dimensional case, some {\it shadows}, explained more precisely later, are regarded as pseudo normal spherical fold map. We can know this from fundamental explanations of \cite{saeki3} and \cite{saeki4} and also from \cite{costantinothurston} and \cite{ishikawakoda} for example. 
\end{Rem}

We state the following fundamental fact as a proposition.
\begin{Prop}
Let $W_p$ be a simple polyhedron.
For any integer $m>2$, there exists a closed smooth manifold of dimension $m$ and a pseudo normal spherical fold map onto the polyhedron $W_p$.
\end{Prop}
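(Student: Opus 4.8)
The plan is to carry out the local-to-global construction of the proof of Theorem~\ref{thm:1}, retaining only the part that produces the $m$-dimensional source manifold together with its map onto the $2$-dimensional polyhedron, and performing it with the fibre sphere $S^{2}$ replaced everywhere by $S^{m-2}$ and special generic functions on $S^{2}$ replaced by special generic functions on $S^{m-2}$. By Definition~\ref{def:7} every point of $W_p$ has a closed neighborhood PL homeomorphic to one of the finitely many model neighborhoods occurring in the target of a pseudo normal spherical fold map on a closed $4$-manifold: a disc at a regular interior point, a half-disc at a definite single point, a neighborhood of the form $Y\times(-1,1)$ at an indefinite single point, and the double-point models of FIGURE~\ref{fig:1}, FIGURE~\ref{fig:2} and FIGURE~\ref{fig:3} (together with the cusp model of FIGURE~\ref{fig:9} if one works in the stable-map setting of Remark~\ref{rem:2}). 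First I would fix a finite triangulation of $W_p$ --- which is necessarily compact, being the image of a closed manifold --- refining it so that the singular value set and all single points, double points and cusps form a subcomplex.

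Over each closed cell of this triangulation I would build the preimage and its map onto the cell using exactly the pieces of STEPS~1--5 of the proof of Theorem~\ref{thm:1}: over a regular $2$-cell a trivial $S^{m-2}$-bundle; over a cell meeting the singular value set the product of a cobordism of special generic functions on $S^{m-2}$ --- respectively, over a definite single point, of a special generic function on $S^{m-2}$ with the empty function as in FIGURE~\ref{fig:6} --- with the identity map on an interval or a $2$-cell. Each such $m$-dimensional local model exists because the corresponding $4$-dimensional one exists by Definition~\ref{def:7}, the passage from dimension $4$ to dimension $m$ amounting only to replacing the fibre $S^{2}$ by $S^{m-2}$ and enlarging the cobordisms accordingly; in particular this shows that over every model neighborhood of $W_p$ there is a genuine pseudo-normal-spherical-fold-map-type local map with $S^{m-2}$ fibres. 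Gluing the local maps along the common faces of the triangulation, each onto the corresponding piece of $W_p$, would then produce a closed smooth $m$-manifold $M$ and a continuous map $f_p\colon M\to W_p$ which, by construction, is triangulable and has the local forms demanded in Definition~\ref{def:4}, hence is a pseudo normal spherical fold map onto $W_p$ once the gluing is made coherent.

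The main obstacle is precisely this coherence --- assembling the local pieces into a single closed smooth manifold; note that, unlike in Theorem~\ref{thm:1}, we do not need $M$ orientable, which leaves extra freedom. I would proceed along the stratification: first fix the local model at each double point and cusp, which exists by Definition~\ref{def:7} transported to dimension $m$; then extend over the $1$-skeleton, where along an edge lying in the singular value set the only new datum is the monodromy of a Y-bundle of Definition~\ref{def:6}, either trivial or free, and these are matched pairwise along an edge because an edge is an interval; finally extend over the regular $2$-cells, where, just as in STEP~5, the $S^{m-2}$-bundle induced over the boundary circle of such a cell can be arranged to be trivial --- using, as there, knowledge of $\mathrm{Diff}(S^{m-2})$ in low degrees, the argument for $m=4$ being the one with the surjection $\pi_1(SO(2))\to\pi_1(SO(3))$ --- and therefore extends over the contractible cell. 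The genuinely delicate step is the coherence around the true vertices, where three or four of these pieces meet at once; a compatible choice of all the incident data exists there because the model neighborhood of such a vertex is, by Definition~\ref{def:7}, cut out of an honest pseudo normal spherical fold map, whose $m$-dimensional analogue supplies that choice simultaneously for every incident piece. The resulting $M$ is closed because the only boundary the local models create is a fibre sphere $S^{m-2}$ (respectively the $D^{m-1}$ over a definite single point), and each such piece of boundary is identified with the boundary of a neighboring piece or capped off, in accordance with the boundary structure of $W_p$ as a simple polyhedron.
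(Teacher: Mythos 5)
The paper does not actually prove this Proposition: it is announced with the sentence ``We state the following fundamental fact as a proposition,'' the justification being delegated to the cited literature on Turaev's shadows and on spherical stable maps (\cite{turaev}, \cite{martelli}, \cite{costantinothurston}, \cite{kobayashisaeki}, \cite{saekisuzuoka}). So there is nothing in the text to compare your argument against line by line; what you have written is precisely the standard ``thickening'' construction from that literature, and in outline it is sound. You correctly identify the two points that make this Proposition easier than Theorem~\ref{thm:1}: no orientability of the total space is required, and no hypothesis of compatibility with the natural orientation is imposed on $W_p$ (indeed the Proposition must cover examples like FIGURE~\ref{fig:13}), so arbitrary Y-bundle monodromies along circles of the singular value set are simply realized by the corresponding self-diffeomorphisms of the $(m-1)$-dimensional three-holed sphere fibre, rather than being obstructions.

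One place where your wording should be tightened: in the extension over the $2$-cells you invoke ``knowledge of $\mathrm{Diff}(S^{m-2})$ in low degrees,'' but for general $m$ the homotopy type of $\mathrm{Diff}(S^{m-2})$ is not known and is in general not that of $O(m-1)$, so the literal analogue of the $\pi_1(SO(2))\to\pi_1(SO(3))$ argument of STEP~5 is unavailable. This is harmless here because you are constructing the manifold rather than analyzing a given one: build every local piece with structure group $O(m-1)$ (linear sphere bundles and linear models of the fold cobordisms), choose product bundles with identity clutching over the regular regions --- each component of the regular value set retracts to a graph, so its bundle is determined by freely chosen monodromies --- and then the restriction to the boundary circle of every $2$-cell is trivial by fiat and extends. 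A second minor point: to conclude that the glued map satisfies Definition~\ref{def:4} you should note that each of your local pieces is not merely a plausible local model but actually occurs in the quotient map of some genuine normal spherical fold map on a closed $m$-manifold (e.g.\ a round fold map as in FIGURE~\ref{fig:10}), which is easy but worth saying since Definition~\ref{def:4} is phrased by comparison with honest quotient maps.
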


As presented in Remark \ref{rem:4}, some {\it shadows}, regarded as pseudo quotient maps, have been introduced by Turaev \cite{turaev} and they explain for the case $m=3$ with the source manifolds orientable. 
Costantino and Thurston have made use of the fact that Reeb spaces of stable fold maps from $3$-dimensional
 closed orientable manifolds into surfaces without connected components of inverse images called II$^3$ type, are
 regarded as shadows in \cite{costantinothurston}. Recently a related study \cite{ishikawakoda} has been done. 
There are various related studies on shadows and manifolds admitting shadows 
whose singular value sets are disjoint unions of circles or whose singular value sets have intersections fixing the number of intersections. For example, Martelli studied manifolds (more precisely, $4$-dimensional closed smooth manifolds obtained in canonical ways) admitting shadows systematically in \cite{martelli} and later in \cite{kodamartellinaoe} with Koda and Naoe. 
In \cite{naoe}, \cite{naoe2} and \cite{naoe3}, Naoe systematically and explicitly studied contractible shadows and $3$-dimensional
 manifolds admitting such shadows.
In these studies, there appear various explicit shadows for which we cannot apply Theorem \ref{thm:1}.

As an explicit fact, the following is also known. It comes from a study of Kobayashi and Saeki \cite{kobayashisaeki} and that of Naoe \cite{naoe}, which are mutually independent studies, and
 some additional discussions.
\begin{Fact}[\cite{kobayashisaeki} and \cite{naoe} etc.]
\label{fact:3}
Let $M$ be a closed and connected manifold of dimension $m>2$ admitting a pseudo spherical fold map onto a $2$-dimensional simple polyhedron $W_p$ such that the singular value
 set is a disjoint union of circles or contains no double points. If $m\geq 4$ holds, $M$ is
 simply-connected and $H_2(W_p;\mathbb{Z})$ is zero, then $W_p$ is contractible and can be collapsed to the $2$-dimensional closed disc and $M$ is a homotopy sphere. Especially, for $m=4,5,6$, $M$ is the standard sphere.
If $m=3$ holds, $W_p$ is simply-connected and $H_2(W_p;\mathbb{Z})$ is zero, then $W_p$ is contractible and can be collapsed to the $2$-dimensional closed disc and $M$ is diffeomorphic to $S^3$.
\end{Fact}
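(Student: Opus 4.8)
The plan is to treat the two cases ($m\ge 4$ and $m=3$) in parallel: first establish that $W_p$ is contractible, then that it collapses to the $2$-disc, and finally read off the assertions about $M$; it is this last step where the dimension genuinely matters. For the contractibility, when $m\ge 4$ Proposition \ref{prop:2} gives an isomorphism $\pi_1(M)\to\pi_1(W_p)$, so $\pi_1(W_p)=1$; moreover $H_1(W_p)\cong H_1(M)=0$, $H_2(W_p)=0$ by hypothesis, and $H_k(W_p)=0$ for $k\ge 3$ since $\dim W_p=2$. As $W_p$ is a finite polyhedron (Fact \ref{fact:2}), the Hurewicz and Whitehead theorems then give that $W_p$ is contractible. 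When $m=3$, $W_p$ is assumed simply-connected, and the same homological computation (using $H_2(W_p)=0$ and $\dim W_p=2$) again forces contractibility.

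The collapse $W_p\searrow D^2$ is where the hypothesis that the singular value set of $f_p$ is a disjoint union of circles --- equivalently, that the simple polyhedron $W_p$ (Definition \ref{def:7}) has no true vertices --- is used decisively. Using the stratification of $W_p$, the idea is to perform elementary PL collapses together with the standard shadow moves so as to delete the singular circles one at a time while staying within the class of simple polyhedra; once the singular set is empty, $W_p$ is a compact surface, necessarily $D^2$ by contractibility. This is the combinatorial heart of the matter: it is the content of Naoe's study of contractible simple polyhedra and is implicit in the work of Kobayashi and Saeki, and it is the step I expect to be the main obstacle, since here one must genuinely control the combinatorics of the polyhedron rather than merely its homotopy type; the ``no double point'' hypothesis is exactly what rules out contractible-but-non-collapsible configurations of Bing's-house type.

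Given contractibility, for $m\ge 4$ Proposition \ref{prop:2} shows that the quotient map induces $\pi_k(M)=0$ and $H_k(M)=0$ for $1\le k\le m-3$, so $M$ is $(m-3)$-connected; combining this with Poincar\'{e} duality --- and, in the borderline case $m=4$, with the equality $\operatorname{rank} H_2(M)=2\operatorname{rank} H_2(W_p)=0$ from Proposition \ref{prop:2} --- all of the reduced homology of $M$ vanishes, so $M$ is a homotopy $m$-sphere. For $m=5,6$ this already yields the standard sphere, since $\Theta_5=\Theta_6=0$.

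Finally, for $m=3,4$ one exploits the collapse $W_p\searrow D^2$ itself: realizing $M$ fibrewise over $W_p$ from the local pieces appearing in the proof of Theorem \ref{thm:1}, the collapse organizes these pieces into a presentation of $M$ carrying no exotic gluing data, because for $m=3,4$ the diffeomorphism groups of $S^{m-2}$ and $D^{m-1}$ are homotopy equivalent to the corresponding orthogonal groups (Smale, Hatcher). Equivalently, the thickening $X^{m+1}$ of $W_p$ (with $M$ as its vertical boundary and $X^{m+1}$ deformation retracting onto $W_p$) is a regular neighbourhood of a PL $2$-disc, hence a PL $(m+1)$-ball, so $M=\partial X^{m+1}\cong S^m$; in particular for $m=3$ one does not even need geometrization. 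For $m\ge 7$ no such simplification is available, and one can conclude no more than that $M$ is a homotopy sphere --- exotic spheres genuinely occur as source manifolds in that range.
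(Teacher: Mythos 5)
The paper does not actually prove this statement: it is labelled a \emph{Fact} and imported wholesale from Kobayashi--Saeki and Naoe, with the surrounding text (the Remark immediately after, and the closing remarks of Section 5) only indicating where each ingredient lives. Measured against those sources, your assembly is essentially the intended one and is correct in outline: Proposition \ref{prop:2} plus the hypotheses give that $W_p$ is a simply connected finite $2$-polyhedron with vanishing reduced homology, hence contractible; the collapse $W_p\searrow D^2$ is exactly Naoe's theorem on simply connected simple polyhedra without double points and with trivial $H_2$ (built on Martelli's graph encoding), and you are right both to defer to it and to identify it as the genuine crux --- the paper's Remark and the Bing's-house example confirm that the no-double-point hypothesis is what excludes contractible-but-non-collapsible polyhedra; Poincar\'e duality plus the rank formula of Proposition \ref{prop:2} (together with the observation, which you should state, that $H_2(M;\mathbb{Z})$ is torsion-free when $m=4$ because $M$ is simply connected) give the homotopy-sphere conclusion, and $\Theta_5=\Theta_6=0$ settles $m=5,6$. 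For $m=3,4$ the operative argument is the second of the two you offer: the $(m+1)$-dimensional thickening bounded by $M$ collapses with $W_p$ onto a $2$-disc, is therefore a PL ball, and its boundary $M$ is the standard sphere; this is precisely Naoe's $D^4$ result in the $m=3$ setting and the reason the paper can assert standardness at $m=4$ in Fact \ref{fact:3} \emph{without} the ``compatible with the natural orientation'' hypothesis that Theorem \ref{thm:2} needs. Your first alternative for $m=4$ --- realizing $M$ fibrewise from the local pieces of Theorem \ref{thm:1} --- would silently reintroduce that compatibility hypothesis (the paper's FIGURE \ref{fig:13} is an example satisfying Fact \ref{fact:3} to which Theorem \ref{thm:1} does not apply), so it should be dropped in favour of the thickening argument. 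Your closing remark that exotic spheres genuinely occur for large $m$ agrees with the paper's final paragraph.
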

We remark on \cite{naoe}.
\begin{Rem}
Naoe \cite{naoe} has shown that a $2$-dimensional simple polyhedron $W_p$ without double points being simply-connected and satisfying $H_2(W_p;\mathbb{Z}) \cong \{0\}$ can be collapsed to the $2$-dimensional closed disc based on the study \cite{martelli}, in which representations of shadows or simple polyhedra with the singular value sets being disjoint unions of circles or having no double points by labeled graphs have been introduced and presented. Note also that a main purpose of \cite{naoe} is to show that a $4$-dimensional compact manifold bounded by the source $3$-dimensional manifold obtained by a canonical
 procedure based on the fundamental theory of shadows is diffeomorphic to $D^4$ and that \cite{naoe2} is also on such studies of $4$-dimensional manifolds including $3$-dimensional manifolds appearing
 as the boundaries. 
\end{Rem}
\begin{figure}
\includegraphics[width=35mm]{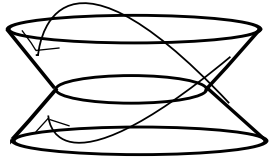}
\caption{An example satisfying the assumption of Fact \ref{fact:3} such that the target $2$-dimensional polyhedron is not compatible with the natural orientation and for which we cannot apply Theorem \ref{thm:1} (see also section 7 of \cite{kobayashisaeki}).}
\label{fig:13}
\end{figure}
Note that in the case of Fact \ref{fact:3}, the singular value set is a disjoint union of spheres and that we cannot always apply Theorem \ref{thm:1}; for example, to the case of FIGURE \ref{fig:13}, appearing in \cite{kobayashisaeki}. 

We have the following.

\begin{Thm}
\label{thm:2}
The source manifold of a pseudo normal spherical fold map onto a $2$-dimensional simple polyhedron which is simply-connected and whose second homology group with coefficient ring $\mathbb{Z}$ vanishes is a homotopy sphere if the dimension of the source manifold is larger than $3$ and in the case where the dimension of the source manifold is $5$ or $6$, the source manifold is diffeomorphic to a standard sphere. In addition, if the simple polyhedron is compatible with the natural orientation, then also for the case where the source manifold is $4$-dimensional, the source manifold is a standard sphere or $S^4$.
\end{Thm}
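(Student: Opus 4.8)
The plan is to read off the integral homology of the source manifold $M$ from Proposition \ref{prop:2}, upgrade it by Poincaré duality to conclude that $M$ is a homotopy sphere, and then invoke the cited low-dimensional classifications to pin down the diffeomorphism type in the remaining cases. First I would record what the hypotheses on the $2$-dimensional simple polyhedron $W_p$ give: simple-connectivity yields $\pi_1(W_p)=0$ and $H_1(W_p;\mathbb{Z})=0$, the assumption gives $H_2(W_p;\mathbb{Z})=0$, and $H_k(W_p;\mathbb{Z})=0$ for $k\geq 3$ because $\dim W_p=2$; hence $\widetilde{H}_\ast(W_p;\mathbb{Z})=0$. (One could add, via the Hurewicz and Whitehead theorems, that $W_p$ is contractible, but only the vanishing of $\pi_1(W_p)$ and of the reduced homology is needed.)

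Next I would apply Proposition \ref{prop:2} with $m=\dim M>3$. The pseudo quotient map induces an isomorphism $\pi_1(M)\cong\pi_1(W_p)=0$, so $M$ is simply-connected, hence orientable, and it induces isomorphisms $H_k(M;\mathbb{Z})\cong H_k(W_p;\mathbb{Z})=0$ for $1\leq k\leq m-3$. For $m\geq 5$ this already gives $H_2(M;\mathbb{Z})=0$; for $m=4$ Proposition \ref{prop:2} supplies only $\operatorname{rank}H_2(M;\mathbb{Z})=2\operatorname{rank}H_2(W_p;\mathbb{Z})=0$, which upgrades to $H_2(M;\mathbb{Z})=0$ since a simply-connected closed $4$-manifold has torsion-free $H_2$ by Poincaré duality and the universal coefficient theorem. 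Now, on the orientable closed $m$-manifold $M$, since $H_j(M;\mathbb{Z})=0$ for $1\leq j\leq m-3$ and $H_0(M;\mathbb{Z})$ is torsion-free, universal coefficients give $H^j(M;\mathbb{Z})=0$ for $1\leq j\leq m-3$, whence by Poincaré duality $H_k(M;\mathbb{Z})\cong H^{m-k}(M;\mathbb{Z})=0$ for $3\leq k\leq m-1$; together with $H_1(M;\mathbb{Z})=H_2(M;\mathbb{Z})=0$ and $H_m(M;\mathbb{Z})\cong\mathbb{Z}$, this shows that $M$ is an integral homology $m$-sphere, and being also simply-connected $M$ is a homotopy sphere (by Hurewicz and Whitehead). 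This gives the first assertion, and when $m=5$ or $m=6$ the triviality of $\Theta_5$ and $\Theta_6$ makes $M$ diffeomorphic to the standard sphere.

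Finally, for $m=4$ under the extra hypothesis that $W_p$ is compatible with the natural orientation, Corollary \ref{cor:1} produces a special generic map from $M$ into $\mathbb{R}^3$; by Saeki's classification a simply-connected $4$-dimensional closed manifold admitting such a map is a connected sum of total spaces of smooth $S^2$-bundles over $S^2$ (with $S^4$ as the empty connected sum), and since $M$ is a homotopy sphere we have $H_2(M;\mathbb{Z})=0$ while every non-empty such connected sum has non-trivial second homology, so $M$ is diffeomorphic to the standard $S^4$. I expect the main point needing care to be the $m=4$ homology computation rather than any conceptual difficulty: there Proposition \ref{prop:2} only controls the \emph{rank} of $H_2(M;\mathbb{Z})$, so one must use simple-connectivity together with Poincaré duality and universal coefficients both to force $H_2(M;\mathbb{Z})=0$ and to kill $H_3(M;\mathbb{Z})$; everything else is a routine assembly of Proposition \ref{prop:2}, Poincaré duality, Corollary \ref{cor:1}, and the quoted classification results.
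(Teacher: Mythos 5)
Your proposal is correct and follows essentially the same route as the paper: Proposition \ref{prop:2} (plus Poincar\'e duality and Hurewicz--Whitehead, which the paper leaves implicit) gives the homotopy-sphere conclusion and the $m=5,6$ cases, and Theorem \ref{thm:1}/Corollary \ref{cor:1} combined with the classification of $4$-manifolds admitting special generic maps into ${\mathbb{R}}^3$ handles the $4$-dimensional case. Your write-up merely supplies more of the homological bookkeeping (in particular the torsion-freeness of $H_2$ of a simply-connected closed $4$-manifold) than the paper's terse argument.
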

\begin{proof}
We can know this from Proposition \ref{prop:2} except the latter part for the $4$-dimensional case or the fact that the source manifold is $S^4$ in the latter situation. We can
 show this also for the $4$-dimensional case by seeing that we can apply Theorem \ref{thm:1} to this case and that
 the $4$-dimensional homotopy sphere admitting a special generic map into ${\mathbb{R}}^3$ is diffeomorphic to $S^4$ . 
\end{proof}
\begin{figure}
FIGURE \ref{fig:14} shows an explicit case.
\includegraphics[width=35mm]{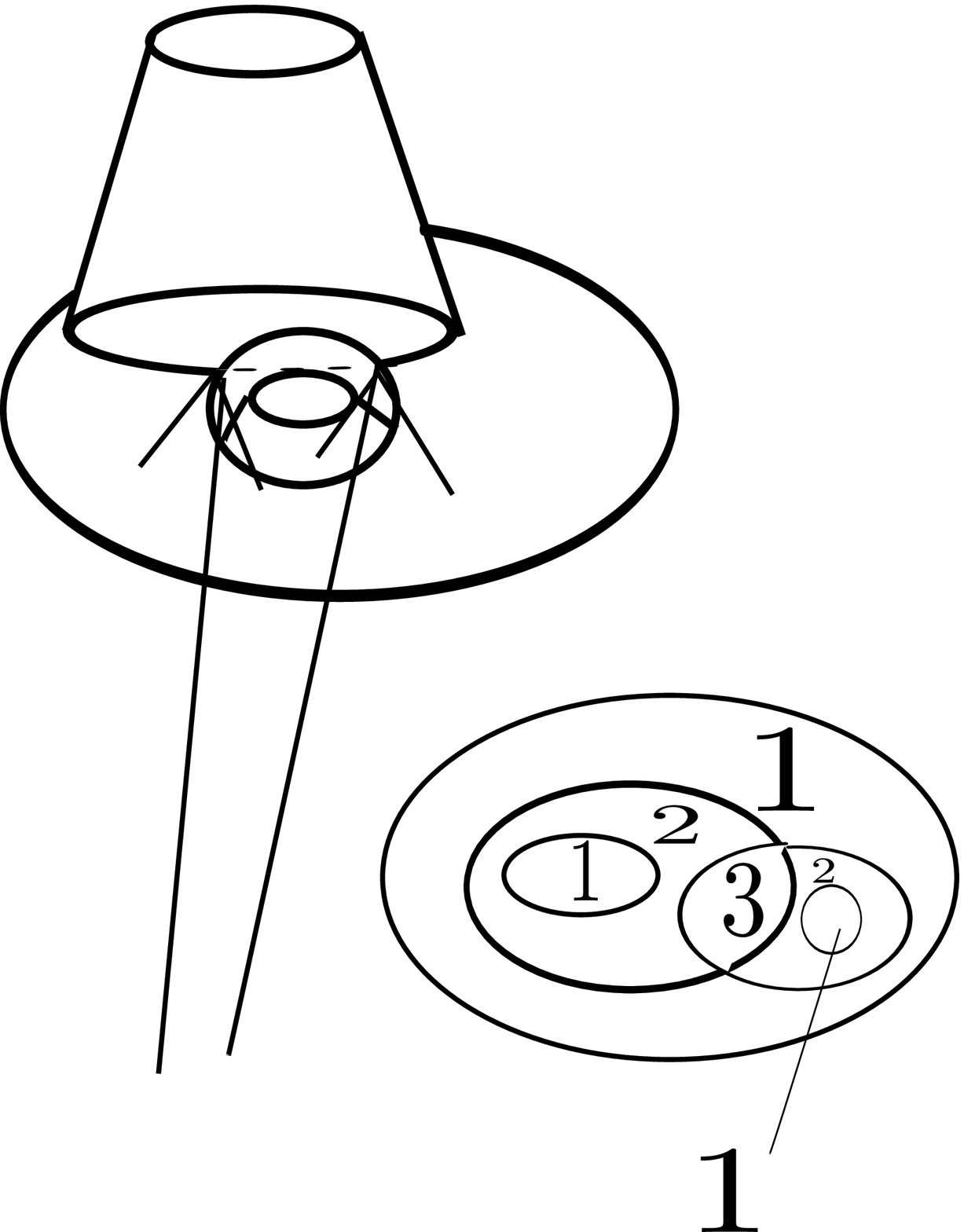}
\caption{A non-trivial example: arrows indicate two crossings in the singular value set, consisting of spheres and the right figure shows a realization by a fold map into
 the plane with the number representing the number of the connected components of the inverse images.}
\label{fig:14}
\end{figure}

\begin{Ex}
\label{ex:4}
Bing's house is a contractible simple polyhedron, compatible with the natural orientation. It is known that the singular value set has $2$ double points.
\end{Ex}
For the case where the source manifold is $3$-dimensional in the theorem,  see \cite{naoe2} for example; the source manifold may not be a homotopy sphere.
\begin{Rem}
In \cite{suzuoka}, Suzuoka determined the diffeomorphism types of manifolds admitting spherical fold maps into the plane by considering the open book structures or bundle over the circle with fibers being inverse images of Morse functions introduced from the structures of the fold maps and applying Kirby diagrams of $4$-dimensional manifolds (FIGURE \ref{fig:15}). He investigated simplest generalizations of spherical fold maps regarded as round fold maps.
We can apply this technique in the cases FIGURE \ref{fig:10}, FIGURE \ref{fig:11} and FIGURE \ref{fig:12}.
However, we cannot apply the method for the case of FIGURE \ref{fig:14}.
\end{Rem}
\begin{figure}
\includegraphics[width=35mm]{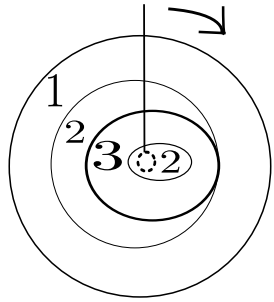}
\caption{Suzuoka`s ideas for determining source manifolds (Morse functions parametrized by points in the dotted circle give a bundle over the dotted circle whose fibers are inverse images of the Morse functions).}
\label{fig:15}
\end{figure}
\section{Results and Remarks for higher dimensions.}
\begin{Thm}
\label{thm:3}
Corollary \ref{cor:1} holds also in the case where the source manifold is $5$-dimensional.
\end{Thm}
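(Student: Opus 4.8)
The plan is to rerun the five-step construction of the proof of Theorem \ref{thm:1}, followed by the argument of Corollary \ref{cor:1}, with $S^2$ systematically replaced by $S^3$. First I would record the shape of the data: for a pseudo normal spherical fold map $f_p$ on a $5$-dimensional closed connected manifold $M$ onto a $2$-dimensional simple polyhedron $W_p$ compatible with the natural orientation, the inverse image of a regular value is a disjoint union of copies of $S^{5-2}=S^3$, and by Definition \ref{def::1} with $m-n+1=4$ the connected component of the inverse image of a small arc meeting the singular value set once is either the standard $D^4$ or the $4$-dimensional manifold obtained from $S^4$ by removing the interiors of three disjoint smoothly embedded $4$-discs. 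Accordingly, in STEP 1, STEP 2 and STEP 3 I would replace the product of a cobordism of special generic functions on $S^2$ with the identity map on $[-1,1]$ by the product of a cobordism of special generic functions on $S^3$ with the identity map on $[-1,1]$; this produces a local map from a $5$-dimensional compact smooth manifold onto a $3$-dimensional compact orientable manifold together with the canonical projection to a small regular neighbourhood of the relevant point of $W_p$. The cobordisms of special generic functions on $S^3$, and the invariance properties used to glue the local pieces compatibly, are available in exactly the same form as for $S^2$ from the cobordism theory of special generic maps (\cite{saeki5}, \cite{saeki}), since $S^3$ bounds $D^4$ and the handle moves involved are the standard ones; STEP 4, the extension over the $1$-skeleton of the regular value set, goes through verbatim (see \cite{kitazawa5}).

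The one step requiring a genuine change is STEP 5, the extension over a $2$-cell $D$ of the regular value set. There the preimage ${f_p}^{-1}(D)$ is the trivial bundle $D \times S^3$, on which I would take the product of a Morse function on $S^3$ with exactly two critical points and the identity map on $D$; the issue is whether the attaching bundle isomorphism over $\partial D$, which is a loop in ${\rm Diff}(S^3)$, is isotopic to one preserving this Morse function. Here, in place of the homotopy equivalence ${\rm Diff}(S^2)\simeq O(3)$ used in Theorem \ref{thm:1}, I would invoke the homotopy equivalence ${\rm Diff}(S^3)\simeq O(4)$ (the Smale conjecture, due to Hatcher), so that the isotopy class of such a loop is detected in $\pi_1(O(4))$; since the subgroup of $O(4)$ preserving the standard height function on $S^3$ contains $SO(3)$ acting on the $S^2$-level sets, and the inclusion $SO(3)\hookrightarrow SO(4)$ induces an isomorphism, hence a surjection, on fundamental groups, every such loop is homotopic to a Morse-function-preserving one. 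Assembling the local pieces then yields a $3$-dimensional compact connected orientable manifold $W_P$ with $\partial W_P\neq\emptyset$, a pseudo special generic map $f_P\colon M\to W_P$, and a continuous map $g\colon W_P\to W_p$ with $f_p=g\circ f_P$, which is the $5$-dimensional analogue of Theorem \ref{thm:1}.

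Finally, as in the proof of Corollary \ref{cor:1}, I would immerse $W_P$ into ${\mathbb{R}}^3$ (possible for every compact $3$-manifold with non-empty boundary) and compose with $f_P$ to obtain a special generic map $M\to{\mathbb{R}}^3$, and then compose with the canonical projection onto the plane to obtain a normal spherical fold map into the plane whose Reeb space is compatible with the natural orientation, by the fundamental theory of \cite{saekisuzuoka} and \cite{shibata}. If one instead starts from the stable-map extension of Remark \ref{rem:2}, the cusps that appear are removed by Levine's technique \cite{levine}, and the parity condition used in the $4$-dimensional case holds automatically because the Euler number of the odd-dimensional closed manifold $M$ vanishes. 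I expect STEP 5, and specifically the use of the Smale conjecture to control the $S^3$-bundle monodromies over circles, to be the only real obstacle; everything else is a dimension-shifted transcription of the proofs of Theorem \ref{thm:1} and Corollary \ref{cor:1}.
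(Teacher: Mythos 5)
Your proposal is correct and follows essentially the same route as the paper's sketch: replace $S^2$ by $S^3$ throughout STEPS 1--5 and, in STEP 5, control the $S^3$-bundle monodromy over $\partial D$ via the homotopy equivalence ${\rm Diff}(S^3)\simeq O(4)$ (Hatcher) together with surjectivity onto ${\pi}_1(SO(4))$ --- the paper uses ${\pi}_1(SO(2))\rightarrow {\pi}_1(SO(4))$ where you use $SO(3)\hookrightarrow SO(4)$, and both suffice. The one point the paper flags that you pass over is that the monodromies along loops in STEPS 1--4 now involve diffeomorphisms of $4$-dimensional cobordism pieces rather than $3$-dimensional ones; the paper disposes of this by invoking the uniqueness of differentiable structures on $5$-dimensional manifolds and modifying the maps without changing the source manifold, whereas you simply assert the gluing goes through unchanged.
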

\begin{proof}[Sketch of the proof.]
Even though we must care about discussions on monodromies along loops, by virtue of the fact that differentiable strutures of $5$-dimensional manifolds are unique, by changing maps into suitable ones
 without changing source manifolds, we can construct maps and manifolds similarly through STEPS 1--4. In STEP 5, note that the diffemorphism group of $S^3$ is homotopy equivalent to $O(4)$ and the
 corresponding homomorphism ${\pi}_1{(SO(2))} \rightarrow {\pi}_1(SO(4))$ (consider a special generic Morse function on $S^3$) between the corresponding homotopy groups are surjective also in this case.
\end{proof}
\begin{Ex}
In \cite{saeki2}, Saeki has shown that a $5$-dimensioal closed and simply connected
 manifold, completely classfied in \cite{barden}, admits a special generic map into ${\mathbb{R}}^3$ if and only if it is represented
 as a connected sum of total $S^3$-bundles over $S^2$. This condition is same as that for a such a manifold to admit a special generic map into ${\mathbb{R}}^4$ proven
 in \cite{nishioka}. Note that this condition on the manifold is equivalent to the assumption that the 2nd homology group is free. 
\end{Ex}
We cannot extend Theorem \ref{thm:1}, Corollary \ref{cor:1} and \ref{thm:3} generally. As presented, according to Saeki, $7$-dimensional homotopy spheres admitting a special generic map into ${\mathbb{R}}^4$ must be standard and Wrazidlo \cite{wrazidlo} has shown that there are at least $14$ oriented diffeomorphism types of $7$-dimensional homotopy spheres such that the homotopy spheres do not admit special generic maps into ${\mathbb{R}}^3$. However, in Fact \ref{fact:3}, the source manifold can be any homotopy sphere except exotic $4$-dimensional spheres; as a simplest example, consider a special generic map whose singular set is an
 embedded circle into the plane
 constructed in \cite{saeki2}.

\end{document}